\theoremstyle{plain}
\newtheorem{theorem}{Theorem}[section]
\newtheorem{lemma}[theorem]{Lemma}
\newtheorem{proposition}[theorem]{Proposition}
\theoremstyle{definition}
\newtheorem{assumption}[theorem]{Assumption}
\newtheorem{definition}[theorem]{Definition}
\theoremstyle{remark}
\newtheorem{remark}{Remark}
\newtheorem*{remarks}{Remarks}
\newcommand\R{\mathbb R}
\newcommand\Z{\mathbb Z}
\newcommand\M{\mathbb{M}}
\newcommand\ud{\,\mathrm{d}}
\newcommand{\e}{\varepsilon}
\newcommand{\eh}{\varepsilon(h)}
\newcommand{\SO}[1]{\operatorname{SO}(#1)}
\newcommand{\so}[1]{\operatorname{so}(#1)}
\newcommand\id{I}
\newcommand\sym{\operatorname{sym}}
\newcommand\skw{\operatorname{skw}}
\newcommand{\WW}{{H^{2}_{\delta}}}
\newcommand\tr{\operatorname{tr}}
\newcommand\dist{\operatorname{dist}}
\newcommand\wto{\rightharpoonup}
\newcommand\wtto{\xrightharpoonup{2,0}}
\newcommand\otto{\xrightharpoonup{osc,0}}
\newcommand\stto{\xrightarrow{2,0}}
\renewcommand{\d}{\partial}
\DeclareMathOperator{\spt}{spt}
\DeclareMathOperator{\secf}{II}
\DeclareMathOperator{\cof}{cof}
\renewcommand{\t}{\widetilde}
\newcommand\esssup{\mathop{\operatorname{ess\,sup}}}
\title{On the derivation of homogenized bending plate model}
\date{\today}
\author{Igor Vel\v{c}i\'c,\\
University of Zagreb, Faculty of Electrical Engineering and Computing\\
  Unska 3, 10000 Zagreb, Croatia,\\
  igor.velcic@fer.hr}
\begin{document}

\maketitle

\begin{abstract}
We derive, via simultaneous homogenization and dimension reduction,
the $\Gamma$-limit for thin elastic plates of thickness $h$ whose energy density
oscillates on a scale $\eh$ such that $ \eh^2 \ll h\ll \eh$. We consider the energy scaling that corresponds
to Kirchhoff's nonlinear bending theory of plates.

 \vspace{10pt}
 \noindent {\bf Keywords:}
 elasticity, dimension reduction, homogenization, nonlinear plate theory,
 two scale convergence.
\end{abstract}



\section{Introduction}

In this paper we derive a model for homogenized bending plate, by means of $\Gamma$-convergence, from $3D$ nonlinear elasticity.
The pioneering papers in derivation of lower dimensional models by means of $\Gamma$ convergence are \cite{AcBuPe} where the equations for elastic string are derived and \cite{LDRa95} where membrane plate equations are derived. It was well known that the obtained models depend on the assumption on the order of external loads with respect to the thickness of the body (see \cite{Ciarletplates} for the approach via formal asymptotic expansion). The pioneering works in deriving higher dimensional models (e.g. bending and von-K\'arm\'an) via $\Gamma$-convergence are \cite{FJM-02,FJM-06} where the key indegredient is the theorem on geometric rigidity.

Recently, models of homogenized bending plate were derived by simultaneous homogenization and dimensional reduction in the special case when the relation between the thickness of the body $h$  and the oscillations of the material $\eh$ satisfy the condition $h\thicksim \eh$ or $\eh \ll h$ i.e. the situations when is such that $\lim_{h \to 0} \tfrac{h}{\eh}=:\gamma \in (0,\infty]$, see \cite{Horneuvel12}. Here we partially cover the case $\gamma=0$, by assuming additionally $\eh^2 \ll h \ll \eh$.
We emphasize the governing approach: since we are in small strain regime, the  energy is essentially convex in strain. Since the strain  is a nonlinear function of the gradient  it is not a-priori guaranteed  that the two scale analysis will be enough to obtain $\Gamma$-limit since this happens only for the energies that are convex in gradient.
 However in this regime, as well as in the regimes studied in \cite{Horneuvel12}, it is enough to include only the oscillations that follow the oscillations of the material to obtain relaxation formula (as a consequence of convexity). As explained below for the case $h\sim \varepsilon(h)^2$ one should not expect this.

Let us emphasize the fact that the result of this paper and \cite{Horneuvel12}  show that, in order to obtain the right answer on the question how to homogenize the bending model of plates, one needs to look full $3D$ equations and perform simultaneous homogenization and dimensional reduction. The result of this paper specially shows that, even in the case when dimensional reduction dominates,  for some regimes $2D$ equations of bending plate do not give enough  information how to do the homogenization (this does not happen in the case of von K\'arm\'an plate).
Namely, the homogenization under the restriction of being an isometry would cause more rigid behavior than the one presented here as suggested by the work of S. Neukamm and H. Olbermann (see \cite{NeuOlb-13}).

The homogenization of von-K\'arm\'an model of plate and shells are discussed in \cite{NeuVel-12,Hornungvel12}.
In the case of von-K\'arm\'an plate all cases for $\gamma$ are obtained; the case $\gamma=0$ corresponds to the situation where dimensional reduction prevails and the case $\gamma=\infty$ corresponds to the situation where homogenization prevails. Both of these cases can be obtained as limit cases from the intermediate thin films that arises when $\gamma \to 0$, i.e., $\gamma \to \infty$.
The case $\gamma=0$ corresponds to the homogenized model of $2D$ von-K\'arm\'an plate; the case $\gamma=\infty$ corresponds to the case of dimensional reduction of homogenized material. The similar phenomenology occurs in the case of bending rod (see \cite{Neukamm-10,Neukamm-11}).
In \cite{Neukamm-10} the author also opened the question on homogenization of bending plate.

For the derivation of von K\'arm\'an shell model from $3D$ elasticity, without homogenization, see \cite{Lewicka1}.
In the case of homogenization of von K\'arm\'an shell the surprising fact  was there are also different scenarios for $\gamma=0$; we were able to identify different models for $\eh^2 \ll h \ll \eh$ and for $h\thicksim \eh^2$. Recently simultaneous homogenization and dimensional reduction without periodicity assumption was performed to obtain the equations of (homogenized) von K\'arm\'an plate (see \cite{Velcic-13}). The arguments presented here strongly suggest that such general approach is not possible in the case of bending plate (as the arguments in \cite{Hornungvel12} suggest that such approach is not possible in the case of von K\'arm\'an shell), due to more complex phenomenology.

The recovery sequence for this model is significantly different then the one defined in \cite{Horneuvel12} and its gradient includes the terms of order $\eh\gg  h$, which then has to be of the specific form, in order to obtain the energy of order $h^2$ (see the expression (\ref{fin-1}) below).
The basic heuristic for the existence of this model lies in that fact. Namely, in the limiting procedure it is possible to add the oscillations in the normal direction  that will cause that the recovery sequence $(u^h)_{h>0}$ satisfies
$$\nabla_h u^h=R+\varepsilon(h)RA^h+O(h).$$
 $R$ is a smooth matrix field taking the values in $\SO 3$ and $A^h$ is a smooth matrix field taking the values in the space of skew symmetric matrices.
This causes the fact that the strain has the order $h$ (using the assumption $\varepsilon(h)^2 \ll h \ll \varepsilon(h)$), since
$$ (\nabla_h u^h)^t \nabla_h u^h=I+O(h)+\varepsilon(h)^2 (A^h)^t A^h.$$
As a consequence we have that the energy is of order $h^2$ , i.e., we are in the bending regime.
The difficulty arises in proving that these are all the relevant effects for relaxation, i.e., in proving what is the two scale limit of the strain. From this simple heuristic it can also be seen why  the regime $h \sim \varepsilon(h)^2$ is critical. Namely, in that case the term $(A^h)^t A^h$ would become relevant in the leading order of the strain, which can cause that the oscillations of order different than $\varepsilon(h)$ could become energetically more optimal (see Remark \ref{slucaj2} below).
This nonlinear term is also reflecting the lack of compactness result, see Remark \ref{slucaj1} below.
However, in the case $h \sim \varepsilon(h)^2$ we are not able to identify the model.
Also this simple heuristic explains why would the homogenization of the bending plate functional under the restriction of being an isometry cause more rigid behavior. Namely, these oscillations in the normal direction for an arbitrary point of the domain would then be prevented by the constraint of being an isometry.
Let us also emphasize the fact that this model can be obtained from the models given in \cite{Horneuvel12} by letting $\gamma \to 0$ (see Remark \ref{rem:poveznica} below). The same phenomenology occurred in the case of von K\'arm\'an shell for the situation $\varepsilon(h)^2 \ll h \ll \varepsilon(h)$, when the continuity in $\gamma$ was preserved.

The compactness result,  given in \cite{FJM-02}, which gives the lower bound of $\Gamma$-limit,  forces us to work with piecewise constant map with values in $\SO 3$ which creates some additional technical difficulties in the compactness lemmas that are needed for recognizing the oscillatory part of two scale limit (see Lemma \ref{lema:glavna1} below). The proof of the compactness result (i.e., identification of two scale limit of strain) is different from the one in \cite{Horneuvel12}. The claims that lead to that proof as well as realizing the possibility of oscillations in the normal direction whose gradient is of order $\varepsilon(h)\gg h$ are the main contributions of this paper with respect to \cite{Horneuvel12}.
The mathematical contribution of series of papers dealing with simultaneous homogenization and dimensional reduction in elasticity (see \cite{Neukamm-10,Neukamm-11,Hornungvel12,Horneuvel12}) consists in adapting two scale convergence methods to the results given in \cite{FJM-02,FJM-06} to obtain more information on the limiting strain. This is, however, non trivial task due to the fact that the compactness results in \cite{FJM-02,FJM-06} are subtle and every situation requires its own adaption.

By $\so n$ we denote the space of skew symmetric matrices in $\M^n$, while by $\M^n_{\sym}$ the space of symmetric matrices in $\M^n$. For a matrix $A$, $\sym A$ denotes its symmetric part while $\skw A$ denotes its skew symmetric part i.e. $\sym A=\tfrac{1}{2}(A+A^t)$,
$\skw A=\tfrac{1}{2}(A-A^t)$. By $\tr A$ we denote the trace of the matrix $A$, while by $|A|$ we denote the Frobenius norm of $A$, $|A|=\tr(A^t A)$.
For a vector $v \in \R^3$ by
$A_{v}$ we denote the skew symmetric tensor given by
$A_{v} x=v \times x$.
We call $v$ the axial vector of $A_v$.
One easily obtains
\begin{equation}
A_{v}=\left(\begin{array}{ccc} 0 &-v_3  & v_2 \\ v_3 & 0 & -v_1\\-v_2 & v_1 &0  \end{array} \right).
\end{equation}
For $v, w \in \R^3$ we have
\begin{equation}
A_{v} \cdot A_{w}=2  v \cdot  w.
\end{equation}
By $\delta_{ij}$ we denote the Kronecker delta. $A\lesssim B$ means that the inequality is valid up to a multiplicative constant $C>0$ on the right hand side.

We need to introduce some function spaces of periodic functions.
From now on, $Y = [0,1)^2$, and we denote by $\mathcal Y$ the set $Y$ endowed
with the topology and differential structure of $2$-torus, so that functions on $\mathcal Y$ will be $Y$-periodic.
We will identify the functions on $\mathcal{Y}$ with their periodic extension on $\R^2$.
\\
We write $C(\mathcal Y)$, $C^k(\mathcal Y)$ and $C^\infty(\mathcal Y)$ for the
spaces of $Y$-periodic functions on $\R^2$ that are continuous,
$k$-times continuously differentiable and smooth, respectively.
Moreover $H^k(\mathcal Y)$ denotes the closure of
$C^\infty(\mathcal Y)$ with respect to the norm in
$H^k(Y)$.  This has the consequence that the space $H^k(\mathcal Y)$ consists of functions that are in $H^k(Y)$ and have the derivatives up to order $k-1$ that are $Y$-periodic (taking into account the traces). The space $L^2(\mathcal Y)$ is equal to the space $L^2(Y)$.

In the same way, $H^k(I\times\mathcal Y)$ (here $I=[-\tfrac{1}{2},\tfrac{1}{2}]$) denotes the closure of
$C^\infty(I,C^\infty(\mathcal Y))$ with respect to the norm in
$H^k(I\times Y)$.

We write $\mathring H^k(\mathcal Y)$ for the subspace of
functions $f\in H^k(\mathcal Y)$ with $\int_{Y}f=0$.
In the analogous way we define $\mathring H^1(I \times\mathcal Y)$, $\mathring{C}^k(\mathcal{Y})$.
The definitions extend in the obvious way to vector-valued functions.

\section{General framework and main result}\label{S:framework}

From now on, $S\subset\R^2$ denotes a bounded Lipschitz domain whose boundary is piecewise $C^1$.
The piecewise $C^1$-condition is necessary only for the proof of the upper bound and can be slightly
relaxed, cf. Section \ref{Upper Bound} for the details.
By $\tilde{S}$ we denote some subset of $S$ with Lipschitz boundary, unless stated otherwise.
\\
For $h > 0$ and $I:=[-\frac{1}{2},\frac{1}{2}]$, we denote by
$\Omega_h:=S\times hI$ the reference configuration of the thin plate of thickness $h$.
By $\tilde{\Omega}_h$ we denote $\tilde{\Omega}_h:=\tilde{S}\times hI$, where $\tilde{S}$ is defined above.
The elastic energy per unit volume associated with a deformation $v^h:\Omega_h\to\R^3$
is given by
\begin{equation}\label{Neu:eq:00}
  \frac{1}{h}\int_{\Omega_h}W( \frac{z_3}{h},\frac{z'}{\e},\nabla v^h(z))\,dz.
\end{equation}
Here and below $z'=(x_1,x_2)$ stands for the in-plane coordinates of a generic
element $z=(x_1,x_2,x_3)\in\Omega_h$ and $W$ is a energy
density that models the elastic properties of a periodic composite.
\begin{assumption}\label{assumption}
  We assume that
  \begin{equation*}
    W: [-\tfrac{1}{2},\tfrac{1}{2}] \times \R^2\times\M^3 \to [0,\infty],\qquad (x_3,y,F)\mapsto W( x_3,y,F),
  \end{equation*}
  is measurable and $[0,1)^2$-periodic in $y$ for all $F$. Furthermore, we assume that for
  almost every $( x_3,y)\in [-\tfrac{1}{2},\tfrac{1}{2}] \times \R^2$, the map $\M^3\ni F\mapsto W( x_3, y,F)\in
  [0,\infty]$ is continuous and satisfies the following properties:
  \begin{align}
    \tag{FI}\label{ass:frame-indifference}
    &\text{(frame indifference)}\\
    &\notag\qquad W(x_3,y, RF)=W(x_3,y, F)\quad\text{ for
      all $F\in\M^3$, $R\in\SO 3$;}\\[0.2cm]
    \tag{ND}\label{ass:non-degenerate}
    &\text{(non degeneracy)}\\
    &\notag\qquad W(x_3,y,  F)\geq c_1\dist^2( F,\SO 3)\quad\text{ for all
      $ F\in\M^3$;}\\
    &\notag\qquad W(x_3, y, F)\leq c_2\dist^2( F,\SO 3)\quad\text{ for all
      $ F\in\M^3$ with $\dist^2( F,\SO 3)\leq\rho$;}\\[0.2cm]
    \tag{QE}\label{ass:expansion}
    &\text{(quadratic expansion at identity)}\\
    &\notag\qquad
    \lim_{G\to 0}\esssup_{y \in \mathcal{Y}}\frac{|W(x_3,y, \id+ G) - Q(x_3,y, G)|}{|G|^2} = 0,\\
    &\notag\qquad\text{for some quadratic form $Q(x_3,y,\cdot)$ on $\M^3$.}
  \end{align}
  Here $c_1,c_2$ and $\rho$ are positive constants which are
  fixed from now on.
\end{assumption}
As a direct consequence of (QE) we have that
 there exists a monotone function $r:\R_+\to\R_+\cup\{+\infty\}$,
  such that $r(\delta)\to 0$ as
  $\delta\to 0$ and
  \begin{equation}\label{eq:94}
    \forall  G\in\M^{3}\,:\,|W( x_3,y,\id+ G)-Q(x_3, y, G)|\leq| G|^2r(| G|).
  \end{equation}
  for  almost every $y\in\R^2$.

We define $\Omega := S\times I$. As in \cite{FJM-02} we rescale the out-of-plane coordinate: for
$x = (x',x_3)\in\Omega$ consider the scaled deformation
$u^h(x',x_3):=v^h(x',hx_3)$. Then \eqref{Neu:eq:00} equals
\begin{equation}
  \label{eq:3'}
  \mathcal E^{h,\e}(u^h):=\int_{\Omega}W(x_3,\frac{x'}{\e},\nabla_h u^h(x))\,dx,
\end{equation}
where $\nabla_h u^h:=\big(\,\nabla' u^h,\,\frac{1}{h}\partial_3
u^h\,\big)$ denotes the scaled gradient, and
$\nabla'u^h:=\big(\,\partial_1u^h,\,\partial_2 u^h\,\big)$ denotes the
gradient in the plane.

As already mentioned in the introduction, it is well known that different models for thin bodies can be obtained from three dimensional elasticity equations by the method of $\Gamma$-convergence. The main assumption that influences the derivation of the model is the assumption on the relation of the order of the energy and the thickness of the body (also the assumption on the boundary conditions can influence the model). The  plate behavior, due to its more complex geometry, is much more complex than the behavior of rods.
We recall some known results on dimension reduction in the homogeneous case when  $W(x_3,y,F)=W(F)$.
In \cite{FJM-06} a hierarchy of
plate models is derived from $\mathcal E^h:=\mathcal E^{h,1}$ in the zero-thickness limit $h\to 0$.
The case  $\mathcal E^h\sim 1$ leads to a membrane model (see \cite{LDRa95}), which is a fully nonlinear plate
model for plates without resistance to compression.
The reason for that is that the compression enables the plate to preserve the metric on the mid-plane and thus these deformations have lower order energy.
The obtained equations are of the same type as the original $3D$ equations i.e. quasilinear and of the second order.

In this article we study the \textit{bending regime} $\mathcal
E^h\sim h^2$, which, as shown in \cite{FJM-02}, leads to Kirchhoff's nonlinear plate
model: as $h\to 0$ the energy $h^{-2}\mathcal E^h$
$\Gamma$-converges to the functional
\begin{equation}\label{Neu:eq:1}
  \int_S Q_2(\secf(x'))\,dx',
\end{equation}
with $Q_2 : \M^2\to\R$
given by the relaxation formula
\begin{equation}\label{dimredukcija}
  Q_2(A)=\min_{d\in\R^3}Q\left(\iota(A)+d\otimes e_3\right);
\end{equation}
here, $Q$ denotes the quadratic form from \eqref{ass:expansion} and $\iota$ denotes the natural injection of $\M^2$ into $\M^3$.

Denoting the standard basis of $\R^3$ by $(e_1,e_2,e_3)$ it is given by
$$ \iota(A)= \sum_{\alpha,\beta=1}^2 A_{\alpha \beta} (e_\alpha \otimes e_\beta).$$

The special case of layered material is considered in \cite{Schmidt-07}. Dependence on $x_3$ variable produces non-trivial effects on the relaxation formula. Namely the limit functional is then given by
\begin{equation}\label{Neu:eq:111}
  \int_S \bar{Q}_2(\secf(x'))\,dx',
\end{equation}
where
\begin{equation}\label{dimredukcija2}
  \bar{Q}_2(A)=\min_{B\in\M^2_{\sym} }\int_I Q_2\left(x_3,x_3A+B\right)\, dx_3.
\end{equation}

In \cite{Horneuvel12} it was shown that in the non-homogeneous case the effective
quadratic form $Q_2$ is determined by a relaxation formula that
is more complicated. For construction of the recovery sequence it was also helpful to understand the behavior of layered materials.
The obtained model
depended on the relative scaling between the thickness $h$ and the
material period $\e$.
 Here we will make the assumption that $\e$ and $h$
are coupled as follows:

\begin{assumption}\label{assumption:gamma}
 We assume that $\e=\eh$ is a nondecreasing
 function from $(0,\infty)$ to $(0,\infty)$ such that $\eh\to 0$ and
 $\eh^2 \ll h \ll \eh$ i.e. $\lim_{h \to 0} \tfrac{h}{\eh}=\lim_{h \to 0} \tfrac{\eh^2}{h}=0$.
\end{assumption}

The energy density of the homogenized plate we derive here is given by means of a relaxation formula that we
introduce next.

\begin{definition}[Relaxation formula]
  \label{D:relaxation-formula}
  Let $Q$ be as in Assumption~\ref{assumption}.
  We  define
    $Q_{2}^{\text{rel}} :\M^2 _{\sym}\to[0,\infty)$ by
\begin{eqnarray*}
Q_{2}^{\text{rel}}(A)&:=&
\inf_{B,U}\iint_{I\times Y}Q\left(x_3,y,\;\iota(x_3A+B)+U\,\right)\,dy\,dx_3,
\end{eqnarray*}
    taking the infimum over all $B\in\M^{2}_{\sym}$
    and $U \in  L_0(I\times\mathcal{Y},\M^3_{\sym}) $,
where
 \begin{eqnarray*}
    L_0(I\times\mathcal Y,\M^3_{\sym})&:=&\Bigg\{\,
    \left(\begin{array}{cc}
        \sym\nabla_y\zeta+x_3\nabla_y^2 \varphi
        & \begin{array}{cc}  g_1\\  g_2\end{array}\\
        ( g_1,\; g_2)& g_3\\
      \end{array}\right)\,:\;\zeta\in \mathring{H}^1(\mathcal Y,\R^2),\\
    &&\qquad\varphi\in \mathring{H}^2(\mathcal
    Y),\,\ g\in L^2(I\times\mathcal Y,\R^3)\, \Bigg\}.
\end{eqnarray*}
We also define the mapping $\mathcal{U}:\mathring{H}^1(\mathcal Y,\R^2) \times  \mathring{H}^2(\mathcal Y) \times L^2(I \times\mathcal Y,\R^3) \to L_0(I\times\mathcal Y,\M^3_{\sym})$ by
$$ \mathcal{U}(\zeta,\varphi,g)=\left(\begin{array}{cc}
        \sym\nabla_y\zeta+x_3\nabla_y^2 \varphi
        & \begin{array}{cc}  g_1\\  g_2\end{array}\\
        ( g_1,\; g_2)& g_3\\
      \end{array}\right). $$
\end{definition}
For a simpler definition of $Q_2^{\text{rel}}$, see Remark \ref{rem:jednostavno}.
\begin{remark} \label{kon20}
It can be easily seen by using Korn's inequality that for $\zeta \in \mathring{H}^1(\mathcal Y,\R^2)$,
$\varphi \in  \mathring{H}^2(\mathcal Y)$, $g \in L^2(I \times\mathcal Y,\R^3)$ we have
\begin{equation}\label{eq:hovel1}
\|\zeta\|_{H^1}+\|\varphi\|_{H^2}+\|g\|_{L^2} \lesssim \| \mathcal{U}(\zeta,\varphi,g)\|_{L^2}.
\end{equation}
\end{remark}

We define the constraint on the deformation to be an isometry:
\begin{equation}
  \label{eq:isometry}
  \partial_\alpha
u\cdot\partial_\beta u=\delta_{\alpha\beta},\qquad\alpha,\beta\in\{1,2\}.
\end{equation}
We also define the set of isometries of $S$ into $\R^3$
\begin{eqnarray}
  \WW(S,\R^3) &:=& \Bigg\{\,u\in H^2(S,\R^3)\, : \, u\text{ satisfies }\eqref{eq:isometry}\mbox{ a.e. in }S\,\Bigg\}.
\end{eqnarray}
With each $u \in \WW(S)$ we associate
its normal $n :=\partial_1 u \wedge \partial_2 u$,
and we define its second fundamental form $\secf: S\to\M^2_{\sym}$
by defining its entries as
\begin{equation}
  \secf_{\alpha\beta} = \partial_\alpha u\cdot\partial_\beta n
= -\partial_{\alpha}\partial_{\beta} u\cdot n.
\end{equation}
We write $\secf^h$ and $n^h$ for the second fundamental form
and normal associated with some $u^h\in\WW(S,\R^3)$.
The $\Gamma$-limit is a functional of the form \eqref{Neu:eq:1}
trivially extended to $L^2(\Omega,\R^3)$ by infinity: we define $\mathcal E:L^2(\Omega,\R^3)\to[0,\infty]$,
\begin{equation*}
  \mathcal E(u):=
  \left\{%
    \begin{aligned}
      &\int_S Q_{2}^{\text{rel}}(\secf(x'))\,dx'&\quad&\text{if }u\in\WW(S,\R^3),\\
      &+\infty&&\text{otherwise.}
    \end{aligned}%
    \right.
\end{equation*}
We identify functions on $S$ with their trivial
extension to $\Omega=S\times I$: $u(x',x_3):= u(x').$

Our main result is the following:
\begin{theorem}
  \label{T:main}
  Suppose that Assumptions~\ref{assumption} and \ref{assumption:gamma}
  are satisfied. Then:
  \begin{enumerate}[(i)]
  \item (Lower bound). If $(u^h)_{h>0}$ is a sequence with
    $u^h-\fint_\Omega u^h\,dx\to
    u$ in $L^2(\Omega,\R^3)$, then
    \begin{equation*}
      \liminf\limits_{h\to 0}h^{-2}\mathcal E^{h,\eh}(u^h)\geq \mathcal E(u).
    \end{equation*}
  \item (Upper bound). For every $u\in\WW(S,\R^3)$ there exists a
    sequence $(u^h)_{h>0}$ with $u^h\to u$ strongly in
    $H^1(\Omega,\R^3)$ such that
    \begin{equation*}
      \lim\limits_{h\to 0}h^{-2}\mathcal E^{h,\eh}(u^h)=\mathcal E(u).
    \end{equation*}
  \end{enumerate}
\end{theorem}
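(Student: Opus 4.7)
The plan is to combine the geometric-rigidity and two-scale convergence framework of \cite{FJM-02} and \cite{Horneuvel12} with the new heuristic highlighted in the introduction: in this regime the recovery sequence carries normal oscillations whose differentiation produces a skew perturbation $\eh R A^h$ of the scaled gradient. For the lower bound (i), from the bound $h^{-2}\mathcal{E}^{h,\eh}(u^h)\leq C$ I would first apply geometric rigidity on a covering of $\Omega$ by cylinders of in-plane radius $\eh$ and height $h$ to produce a map $R^h\colon S\to\SO 3$, piecewise constant on cells of size $\eh$, with $\|\nabla_h u^h - R^h\|_{L^2(\Omega)}\lesssim h$. Mollifying $R^h$ and extracting a subsequence yields a limit $R\in H^1(S,\M^3)$ with $Re_\a=\partial_\a u$ and $Re_3=n$, so $u\in\WW(S,\R^3)$. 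Setting $G^h:=h^{-1}\bigl((R^h)^t\nabla_h u^h-\id\bigr)$, frame indifference rewrites the density as $W(x_3,x'/\eh,\id+hG^h)$, and the quadratic expansion \eqref{eq:94} together with two-scale lower semicontinuity of the convex form $Q$ gives
\begin{equation*}
\liminf_{h\to 0}h^{-2}\mathcal{E}^{h,\eh}(u^h)\geq \int_\Omega\!\!\int_Y Q\bigl(x_3,y,\sym G_0(x,y)\bigr)\,dy\,dx,
\end{equation*}
for any two-scale limit $G_0$ of $G^h$.

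The decisive step is to identify the structure of $\sym G_0$, which is the content of Lemma \ref{lema:glavna1}. The rotation field produces the macroscopic bending $x_3\iota(\secf(x'))$ as in \cite{FJM-02}, so the remainder $U:=\sym G_0 - \iota(x_3\secf + B)$ (for a suitable $B\in L^2(S,\M^2_{\sym})$) must lie in $L_0(I\times\mathcal{Y},\M^3_{\sym})$. The $\sym\nabla_y\zeta$ block is produced by in-plane oscillations of $u^h$ at the material scale; the $x_3\nabla_y^2\varphi$ block is the Kirchhoff-type trace of the $\eh$-amplitude normal oscillations (i.e.\ the $A^h$-term from the heuristic), and the three remaining entries are absorbed into $g$. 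Pointwise minimization in $(B,U)$ at fixed $\secf$ then produces exactly $Q_{2}^{\text{rel}}(\secf(x'))$, integrating to $\mathcal{E}(u)$.

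For the upper bound (ii) I reduce by density to $u\in C^\infty(\bar S,\R^3)\cap\WW(S,\R^3)$ and to smoothly varying near-optimizers $(\zeta,\varphi,g,B)$ for the relaxation at each $x'\in S$. The recovery sequence has the schematic form
\begin{equation*}
u^h(x',x_3) = u(x') + hx_3 n(x') + \text{(normal osc.\ built from $\varphi$)} + h\cdot\text{(correctors for $\zeta, g, B$)} + O(h^2),
\end{equation*}
where the normal oscillation is calibrated so that $\nabla_h u^h = R + \eh R A^h + O(h)$ with $A^h$ smooth and pointwise skew. Skew-symmetry of $A^h$ eliminates the linear-in-$\eh$ term from the Cauchy--Green tensor, leaving
\begin{equation*}
(\nabla_h u^h)^t\nabla_h u^h - \id = O(h) + \eh^2 (A^h)^t A^h = O(h),
\end{equation*}
valid precisely because $\eh^2\ll h$. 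Substituting into (QE) and invoking dominated convergence gives $\lim_{h\to 0} h^{-2}\mathcal{E}^{h,\eh}(u^h)=\int_S Q_{2}^{\text{rel}}(\secf)\,dx'$.

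I expect the main obstacle to be the lower-bound identification in Lemma \ref{lema:glavna1}. In \cite{Horneuvel12} one works either with $h\gtrsim\eh$, so the cells of the piecewise constant $R^h$ are larger than the microstructure and standard two-scale calculus applies, or with $\eh\ll h$, where the microstructure is resolved across the thickness. In the window $\eh^2\ll h\ll\eh$ the cells of $R^h$ sit exactly at the microstructure scale, so one cannot smooth $R^h$ at scale $\eh$ without erasing the very oscillations one needs to detect; extracting in particular the $x_3\nabla_y^2\varphi$ block — the signature of the $\eh$-amplitude normal oscillations — from a merely piecewise constant rotation approximation is the new technical core and will require a careful cell-by-cell argument exploiting the controlled rotation jumps of size $O(h/\eh)\ll 1$ across neighbouring cells. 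A complementary subtlety on the upper-bound side is that the $\eh^2 (A^h)^t A^h$ contribution to the strain is negligible precisely because $\eh^2\ll h$, which is exactly the reason the borderline $h\sim\eh^2$ (cf.\ Remark \ref{slucaj2}) falls outside this analysis.
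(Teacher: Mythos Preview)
Your overall strategy matches the paper's, but there is a concrete error in the lower bound that cascades into a mis-description of the core technical obstacle. You propose applying rigidity on cylinders of in-plane size $\eh$, so that $R^h$ is piecewise constant on $\eh$-cells. The paper does not do this: it uses the FJM construction of Lemma~\ref{t6f2} verbatim, with $R^h$ piecewise constant on cubes of side $h$ and the accompanying estimates $\|\nabla_h u^h-R^h\|_{L^2}+\|R^h-\tilde R^h\|_{L^2}+h\|\nabla'\tilde R^h\|_{L^2}\lesssim h$ and $\|\tau_\xi R^h-R^h\|_{L^2}\lesssim h$ for $|\xi|_\infty<h$. The hypothesis $h/\eh\to 0$ is then precisely what activates Lemmas~\ref{rotations} and~\ref{glavnapomoc} (with $\eta(h)=h$) and forces the oscillatory part of $\nabla'\tilde R^h$ to be of the second-gradient form $\nabla_y^2\varphi$. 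With your $\eh$-cells, $R^h$ is already constant at the microstructure scale and carries no $y$-information, so this chain cannot even start.

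Accordingly, the obstacle in Lemma~\ref{lema:glavna1} is not that the $R^h$-cells ``sit at the microstructure scale''---they are in fact much \emph{finer}. The real issue is that $R^h$ is merely piecewise constant (not $H^1$), so testing $h^{-1}\sym\bigl[(R^he_\alpha)\cdot\partial_\beta\tilde u^h+(R^he_\beta)\cdot\partial_\alpha\tilde u^h\bigr]-\delta_{\alpha\beta}$ against $(\cof\nabla_y^2 F)(x'/\eh)\psi(x')$ and integrating by parts leaves boundary sums over the $h$-lattice; these are killed using the jump bound together with an auxiliary $H^2$-corrector $\tilde u^h$ built via the elliptic problem in Remark~\ref{rem:regularity}, and the assumption $\eh^2\ll h$ enters because the residual integrals carry a prefactor $\eh^2/h$ (cf.\ Remark~\ref{slucaj1}). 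On the upper-bound side your sketch is close to the paper's, but you gloss over one nontrivial point: the macroscopic field $B$ can only be realized as $q_V=\sym((\nabla'u)^t\nabla'V)$, which is solvable only where $\secf\neq 0$. The paper handles this via the density class $\mathcal A(S)$ of Lemma~\ref{le71} combined with the linearity of the minimizer in Lemma~\ref{L:Qgamma}, which guarantees that the optimal $B$ vanishes on $\{\secf=0\}$.
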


The first step for identifying the $\Gamma$-limit is the compactness result. It gives us the information on the limit deformations that can be concluded from the smallness of energy. The result is given in \cite{FJM-02} and relies on the theorem on geometric rigidity which is the key mathematical indegredient for deriving lower dimensional models.

\begin{theorem}[\mbox{\cite[Theorem~4.1]{FJM-02}}]
  \label{T:FJM-compactness}
  Suppose a sequence $u^h\in H^1(\Omega,\R^3)$ has finite
  bending energy, that is
  \begin{equation*}
    \limsup\limits_{h\to 0}\frac{1}{h^2}\int_{\Omega}\dist^2(\nabla_h
    u^h(x),\SO 3)\,dx<\infty.
  \end{equation*}
  Then there exists $u\in
  H^2_{\delta}(S,\R^3)$ such that
  \begin{align*}
    u^h-\fint_\Omega u^h\,dx&\to u,&\qquad&\text{strongly in }L^2(\Omega,\R^3),\\
    \nabla_h u^h&\to(\,\nabla'u,\,n\,)&\qquad&\text{strongly in
    }L^2(\Omega,\R^{3\times 3}),
  \end{align*}
  as $h\to 0$ after passing to subsequences and extending $u$ and $n$ trivially to $\Omega$.
\end{theorem}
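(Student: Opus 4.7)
The plan is to follow the geometric rigidity approach at the natural scale $h$. I would cover $S$ by a grid of squares $S_{a,h}$ of side $h$ and, on each rescaled cube $S_{a,h}\times I$ equipped with the scaled gradient $\nabla_h u^h$, apply the quantitative rigidity theorem to produce a constant rotation $R_{a,h}\in\SO 3$ with
$$ \int_{S_{a,h}\times I}|\nabla_h u^h-R_{a,h}|^2\,dx \;\lesssim\; \int_{S_{a,h}\times I}\dist^2(\nabla_h u^h,\SO 3)\,dx. $$
Summing over $a$ and invoking the bending hypothesis, the piecewise constant field $R^h:S\to\SO 3$ defined by $R^h\equiv R_{a,h}$ on $S_{a,h}$ satisfies $\|\nabla_h u^h - R^h\|_{L^2(\Omega)}\lesssim h$.

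Next, applying rigidity to the union of two adjacent cubes and using the triangle inequality yields $h^2|R_{a,h}-R_{a',h}|^2 \lesssim \int_{(S_{a,h}\cup S_{a',h})\times I}\dist^2(\nabla_h u^h,\SO 3)$, so summation over neighbors bounds the discrete $\dot H^1$-seminorm of $R^h$ uniformly in $h$. Mollifying at scale $h$ produces a smooth $\tilde R^h:S\to\M^3$ with $\|\tilde R^h - R^h\|_{L^2}\lesssim h$ and $\|\tilde R^h\|_{H^1(S)}\lesssim 1$. Extracting a subsequence, $\tilde R^h\wto R$ weakly in $H^1(S,\M^3)$ and strongly in $L^2$; since $R^h\in\SO 3$ and $R^h\to R$ in $L^2$, the closedness of $\SO 3$ gives $R(x)\in\SO 3$ for a.e.\ $x$, hence $R\in H^1(S,\SO 3)$. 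Combined with the rigidity estimate this upgrades to $\nabla_h u^h\to R$ strongly in $L^2(\Omega,\M^3)$.

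By Poincar\'e, $u^h - \fint_\Omega u^h$ is bounded in $H^1(\Omega,\R^3)$, so a subsequence converges in $L^2(\Omega,\R^3)$ to some $u$. Identifying $\partial_\alpha u = Re_\alpha$ from the in-plane columns of the strong limit, and observing that $\partial_3 u^h = h\cdot(h^{-1}\partial_3 u^h)\to 0$ in $L^2$, the limit $u$ is independent of $x_3$ and lies in $H^2(S,\R^3)$. The condition $R(x)\in\SO 3$ a.e.\ then forces $\partial_\alpha u\cdot\partial_\beta u = \delta_{\alpha\beta}$, so $u\in\WW(S,\R^3)$, and the third column of $R$ must equal $n=\partial_1 u\wedge\partial_2 u$, giving the desired strong convergence $\nabla_h u^h\to(\nabla'u,n)$.

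The main obstacle is the patching estimate that converts cell-wise rigidity into an effective $H^1$-bound on a single rotation field. The lateral scale $h$ is chosen precisely so that each cube has isotropic aspect ratio, making the rigidity constant dimensionless and $h$-independent, while the $h^2$-weight from the bending energy balances the $h^2$ volume of a cube to yield the uniform discrete $H^1$-bound. Without this scale-matching -- i.e.\ if one tried cubes of a different lateral size or invoked rigidity directly on $\Omega$ -- the argument collapses, which is precisely why geometric rigidity at the plate's thickness scale is the essential ingredient.
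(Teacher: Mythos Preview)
Your outline is correct and follows exactly the strategy the paper attributes to \cite{FJM-02} (and sketches just before Lemma~\ref{t6f2}): cover $S$ by cubes of side $h$, apply geometric rigidity cell-wise to produce a piecewise constant rotation field $R^h$, use the neighbor estimate to bound a mollified version $\tilde R^h$ in $H^1$, and then extract the $\SO 3$-valued limit $R=(\nabla'u,n)$. The paper does not give its own proof of this theorem but simply cites \cite[Theorem~4.1]{FJM-02}, so there is nothing further to compare.
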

\begin{remark}
We could also look the energy density dependent additionally on the macroscopic variable $x' \in \omega$ i.e. look the energy functionals of the form
\begin{equation}\label{Neu:eq:0}
  \frac{1}{h}\int_{\Omega_h}W(z', \frac{z_3}{h},\frac{z'}{\e},\nabla v^h(z))\,dz.
\end{equation}
This changes the relaxation formula in an obvious way, but creates some additional technical considerations, see \cite{NeuVel-12}.
\end{remark}

\section{two scale limits of the nonlinear strain}\label{S:two scale}

Two scale convergence was introduced in \cite{Nguetseng-89,
  Allaire-92} and has been extensively applied to various problems in
homogenization.
 This is mainly related to convex energies for which it is known that the oscillation that relax the energy are of the same type as the oscillations of the material. In the non convex case more complex behavior is expected.
In this article we work with the following variant of
two scale convergence which is adapted to dimensional reduction.
\begin{definition}[two scale convergence]
  \label{D:two scale-thin}
  We say a bounded sequence $\{f^h\}_{h>0}$ in $L^2(\Omega)$ two scale
  converges to $f\in L^2(\Omega\times Y)$ and we write $f^h\wtto f$, if
  \begin{equation*}
    \lim\limits_{h\to 0}\int_\Omega
    f^h(x)\psi(x,\frac{x'}{\eh})\,dx=\iint_{\Omega\times Y} f(x,y)\psi(x,y)\,dy\,dx,
  \end{equation*}
  for all $\psi\in C^\infty_0(\Omega,C(\mathcal Y))$. When
  $||f^h||_{L^2(\Omega)}\to||f||_{L^2(\Omega\times Y)}$ in addition,
  we say that $f^h$ strongly two scale converges to $f$ and write
  $f^h\stto f$. For vector-valued functions, two scale convergence is defined componentwise.
\end{definition}
%
 As stated in the definition we allow only oscillations in variable $x'$. It can be easily seen that this restriction does not influence the main results of two scale convergence. Moreover, since we identify functions on $S$ with their trivial
extension to $\Omega$, the definition above contains the standard
notion of two scale convergence on $S\times Y$ as a special case.

Indeed, when $\{f^h\}_{h>0}$ is a bounded sequence in
$L^2(S)$, then $f^h\wtto f$ is equivalent to
\begin{equation*}
  \lim\limits_{h\to 0}\int_S
  f^h(x')\psi(x',\frac{x'}{\eh})\,dx'=\iint_{S\times Y} f(x',y)\psi(x',y)\,dy\,dx',
\end{equation*}
for all  $\psi\in C^\infty_0(S,C(\mathcal Y))$.

%

%

We have the following characterization of the possible
two scale limits of nonlinear strains.
\begin{proposition}
\label{P:strain}
Let $(u^h)_{h>0}$ be a sequence of deformations with finite bending
energy, let $u\in\WW(S,\R^3)$ with the second fundamental form $\secf$, and assume that
\begin{align*}
  u^h-\fint_\Omega u^h\,dx&\to u,&\qquad&\text{strongly in
  }L^2(\Omega,\R^3),\\
  E^h:=\frac{\sqrt{(\nabla_h u^h)^t\nabla_h u^h}-\id}{h}&\wtto: E,&\qquad&\text{weakly two scale},
\end{align*}
for some $E\in L^2(\Omega\times Y; \M^3)$. Then there exist
  $
  B \in L^2(S, \M^2_{\sym}),
  $
   $\zeta \in L^2(S,\mathring H^1(I \times \mathcal Y,\R^2))$, $\varphi \in L^2(S,\mathring H^2(\mathcal Y))$ and $g\in L^2(S,L^2(I \times \mathcal{Y},\R^3))$ such that
  \begin{equation}\label{P:strain:1a}
    E(x, y) =
        \iota \left(x_3\secf(x')+B(x')\right)
    +\mathcal{U}\big(\zeta(x,\cdot),\varphi(x,\cdot),g(x,\cdot,\cdot)\big) (x_3,y).
  \end{equation}
\end{proposition}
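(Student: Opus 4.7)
The plan is to adapt the Friesecke--James--M\"uller strategy from \cite{FJM-02} to the homogenization regime $\eh^2 \ll h \ll \eh$. The novelty, compared to the purely dimension-reducing case, is the appearance of the Hessian term $x_3 \nabla_y^2 \varphi$ in $\mathcal{U}$; heuristically this encodes the normal-direction oscillations of amplitude $\eh$ (with gradient $\eh \gg h$) that the author identifies in the introduction as the main new mechanism in this regime. I would first apply the geometric rigidity theorem on cubes of scale $h$ to produce a piecewise constant $R^h : S \to \SO 3$ with $\|\nabla_h u^h - R^h\|_{L^2(\Omega)} \lesssim h$, and with $R^h \to R := (\nabla' u \mid n)$ in $L^2$ by Theorem~\ref{T:FJM-compactness}. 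Setting
\[
  G^h := \frac{(R^h)^t \nabla_h u^h - \id}{h},
\]
$G^h$ is bounded in $L^2(\Omega;\M^3)$, and from $(\nabla_h u^h)^t \nabla_h u^h = (\id + hG^h)^t(\id + hG^h)$ together with a standard truncation argument one obtains $E^h = \sym G^h + o(1)$ in $L^2$. Hence, extracting a two-scale limit $G$ of $G^h$, it suffices to identify the structure of $E = \sym G$.

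Next I would split $G = \bar G(x) + (G - \bar G)$, where $\bar G(x) := \int_Y G(x,y)\,dy$. Testing $G^h$ against $y$-independent functions and using $\partial_\alpha u = R e_\alpha$, $\partial_\alpha n = -\secf_{\alpha\beta} R e_\beta$ in the spirit of FJM recovers the classical bending content: the upper-left $2 \times 2$ block of $\sym \bar G$ equals $x_3 \secf(x') + B(x')$ for some $B \in L^2(S;\M^2_{\sym})$, and the remaining entries of $\sym \bar G$ are free, to be absorbed into $g$. This produces the $\iota(x_3 \secf + B)$ contribution. For the oscillatory remainder I would test $G^h$ against $\psi(x)\eta(x'/\eh)$ with $\eta \in \mathring{C}^\infty(\mathcal{Y})$ and integrate by parts in $x'$. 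The derivatives on $\eta$ produce factors $\eh^{-1}$ which, combined with $h/\eh \to 0$, force the $\alpha\beta$-block of $\sym(G-\bar G)$ to belong to $\{\sym \nabla_y \zeta + x_3 \nabla_y^2 \varphi\}$ with $\zeta \in L^2(S,\mathring{H}^1(I\times\mathcal{Y},\R^2))$ and $\varphi \in L^2(S,\mathring{H}^2(\mathcal{Y}))$: the $x_3$-average of the oscillatory in-plane strain has symmetric $y$-gradient form, while its first $x_3$-moment turns out to be a symmetric $y$-gradient whose curl-free structure in $y$ is in fact a Hessian. The entries in the last row or column of $\sym(G-\bar G)$ receive no such constraint and are packaged into $g \in L^2(S,L^2(I\times\mathcal{Y},\R^3))$.

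The principal obstacle will be the rigorous execution of the integration-by-parts arguments in Step~3 in the presence of the piecewise constant $R^h$. Since $R^h$ varies on the mesh scale $h \ll \eh$, commuting $(R^h)^t$ past $\nabla'$ in the test-function identities introduces a collection of jump terms across cell interfaces that must be shown to vanish in the two-scale limit; this is precisely the content of the dedicated compactness statement (Lemma~\ref{lema:glavna1}) promised by the author. A closely related difficulty is to identify the \emph{Hessian} structure $\nabla_y^2 \varphi$ of the $x_3$-linear oscillatory part: it is not enough to conclude that this $x_3$-first-moment is closed in $y$; one has to exploit the symmetry of $E$ together with the two-dimensional Poincar\'e lemma on the torus to recover a scalar potential $\varphi$, thereby matching exactly the "normal-oscillation" mechanism $\eh \varphi(x',x'/\eh) n(x')$ identified in the introduction as distinguishing this regime from the one treated in \cite{Horneuvel12}.
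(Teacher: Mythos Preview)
Your broad strategy --- pass to the approximate strain $G^h=(R^h)^t(\nabla_h u^h-\id)/h$, show $E=\sym G$, and then identify the macroscopic and oscillatory parts separately --- matches the paper. But two of the steps you outline would not go through as described, and the paper fills them by devices you do not mention.

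\textbf{(1) You are missing the $H^2$ regularization of the in-plane deformation.} Your plan is to test $G^h$ directly against oscillatory test functions and integrate by parts. This cannot be done with $u^h$ alone: the in-plane gradient $\nabla' u^h$ is only in $L^2$, and the integration by parts in Lemma~\ref{lema:glavna1} requires a map in $H^2(\tilde S)$ (the bound $h^2\|\nabla'^2\tilde u^h\|_{L^2}^2\leq Ch^2$ is used several times to control the jump terms across the $h\mathbb{Z}^2$ interfaces). The paper fixes this by first averaging in $x_3$ to get $\bar u^h=\int_I u^h\,dx_3$ and then replacing $\bar u^h$ by the minimizer $\tilde u^h\in H^2(\tilde S,\R^3)$ of $\int_{\tilde S}|\nabla v-(\tilde R^h e_1,\tilde R^h e_2)|^2$ (Remark~\ref{rem:regularity}); elliptic regularity on a $C^{1,1}$ subdomain $\tilde S\Subset S$ gives the needed $H^2$ bound, at the cost of an extra correction term $\tfrac1h(R^h)^t(\nabla'\bar u^h-\nabla'\tilde u^h)$ whose two-scale limit must be tracked separately. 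Without this construction, Lemma~\ref{lema:glavna1} is not applicable.

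\textbf{(2) Your mechanism for the Hessian $\nabla_y^2\varphi$ is not the right one.} You propose to isolate the $x_3$-linear part of the oscillatory in-plane strain and then argue that ``symmetry of $E$ together with the two-dimensional Poincar\'e lemma'' upgrades a symmetric $y$-gradient to a Hessian. Symmetry of $E$ does not supply the missing curl condition on the potential; there is no algebraic identity on $\sym(G-\bar G)$ alone that forces this. In the paper the Hessian comes from a completely different source: one writes the explicit decomposition $u^h=\bar u^h+hx_3\tilde R^h e_3+hz^h$ so that the $x_3$-linear piece of $G^h$ is $x_3(R^h)^t(\nabla'\tilde R^h e_3,0)$, and then identifies the two-scale corrector of $\nabla'\tilde R^h$ via Lemmas~\ref{rotations} and~\ref{glavnapomoc}. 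The argument combines two constraints on $\tilde R^h$: (a) $\tilde R^h$ is $L^2$-close (at order $h\ll\eh$) to the $\SO 3$-valued field $R^h$, which forces the oscillatory corrector of $\partial_\alpha\tilde R^h$ to be of the form $RA_{\partial_{y_\alpha}w}$ for some vector field $w$ (Lemma~\ref{rotations}); and (b) $(\tilde R^h e_1,\tilde R^h e_2)$ is $L^2$-close (again at order $h\ll\eh$) to the gradient $\nabla'\bar u^h$, which via Lemma~\ref{pomoooc} forces the corrector to be a second $y$-gradient. Matching the two structures kills the component $w_3$ and leaves a single scalar potential $\varphi$. Thus the Hessian term is extracted from the rotation field $\tilde R^h$, not from an $x_3$-moment of the strain, and it relies on the $\SO 3$-constraint in an essential way.

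In short: replace your moment splitting by the explicit decomposition of $u^h$ in terms of $\bar u^h$, $\tilde R^h e_3$ and $z^h$; insert the $H^2$ regularization $\tilde u^h$ before invoking Lemma~\ref{lema:glavna1}; and derive the Hessian structure from Lemmas~\ref{rotations}--\ref{glavnapomoc} rather than from a Poincar\'e-type argument on $E$.
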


The starting point of the proof of the previous proposition is  \cite[Theorem 6]{FJM-06} (see also the proof of \cite[Theorem 4.1]{FJM-02}. The proof, roughly speaking, consists in dividing $S$ in small cubes of size $h$, applying the theorem on geometric rigidity on each cube to obtain the mapping $R$ from Lemma and then mollifying the mapping $R$ to obtain the mapping $\tilde{R}$.

\begin{lemma}
\label{t6f2}
There exist constants $C,c>0$, depending only
on $S$, such that the following is true:
if $u\in H^1(\Omega, \R^3)$ then there exists a map $R : S\to SO(3)$
which is piecewise constant on each cube $x + hY$ with $x\in h\Z^2$ and there exists
$\t{R}\in H^1(S, \M^3)$ such that for each $\xi \in \R^2$ which satisfy $|\xi|_\infty=\max\{|\xi \cdot e_1|,|\xi \cdot e_2|\}<h$ and for each $S' \subset S$  which satisfy $\dist(S',\partial S)>ch$ we have:
\begin{eqnarray*}
&& \|\nabla_hu - R\|^2_{L^2(S' \times I)} + \|R - \t{R}\|^2_{L^2(S')}+h^2 \|R - \t{R}\|^2_{L^\infty(S')} + h^2\|\nabla'\t{R}\|^2_{L^2(S')}\\
&&+\|\tau_{\xi} R-R\|^2_{L^2(S')} \leq C \|\dist(\nabla_h u, \SO 3) \|^2_{L^2(\Omega)},
\end{eqnarray*}
where $(\tau_{\xi} R)(x'):=R(x'+\xi)$.
\end{lemma}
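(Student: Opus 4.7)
The plan is to apply the Friesecke--James--Müller geometric rigidity theorem cube by cube on a grid of scale $h$, construct $\tilde R$ as a mollification of the resulting piecewise constant $R$ at the same scale, and then extract a difference--quotient estimate from the basic comparison between adjacent cubes.

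First, I partition the plane by the lattice of cubes $C_x = x + hY$, $x \in h\Z^2$. For every cube with $C_x \subset S$ I apply the rigidity theorem to $v^h$ on the thin region $C_x \times hI$ (a $h \times h \times h$ box); by scale invariance of the rigidity constant, changing variables back to $\nabla_h u$ on $C_x \times I$ yields a rotation $R_x \in \SO 3$ with
\begin{equation*}
\int_{C_x \times I}|\nabla_h u - R_x|^2 \le C \int_{C_x \times I}\dist^2(\nabla_h u,\SO 3).
\end{equation*}
Setting $R\!\mid_{C_x} = R_x$ (and $R = \id$ on cubes crossing $\partial S$, which are irrelevant since $\dist(S',\partial S) > ch$) and summing over the finitely many cubes meeting $S'$ produces the first term in the target estimate.

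The engine of everything that follows is the \emph{adjacent--cube bound}: for neighboring cubes $C_x, C_{x'}$, applying rigidity to the $2h \times h \times h$ box $(C_x \cup C_{x'}) \times hI$ to obtain a joint rotation $R_{xx'}$, and comparing with $R_x$ and $R_{x'}$ via the triangle inequality (using that $|R_{xx'} - R_x|$ is constant on $C_x$, so that the local $L^2$ bound divided by the volume $h^2$ gives a pointwise bound), one gets
\begin{equation*}
|R_x - R_{x'}|^2 \;\le\; \frac{C}{h^2}\int_{(C_x\cup C_{x'})\times I}\dist^2(\nabla_h u,\SO 3).
\end{equation*}
I then define $\tilde R := R \ast \eta_h$, where $\eta_h(y) = h^{-2}\eta(y/h)$ is a standard mollifier. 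Since each value $\tilde R(x')$ is a convex combination of $R$ over a bounded number of nearby cubes, $|R - \tilde R| \lesssim \max_{\mathrm{nbrs}}|R_x - R_{x'}|$ pointwise and $|\nabla' \tilde R| \lesssim h^{-1}\max_{\mathrm{nbrs}}|R_x - R_{x'}|$. Squaring, integrating, and applying the adjacent--cube bound (each $\dist^2$ integral is counted a bounded number of times) immediately gives $\|R-\tilde R\|_{L^2(S')}^2$, $h^2\|R-\tilde R\|_{L^\infty(S')}^2$, and $h^2\|\nabla'\tilde R\|_{L^2(S')}^2$ all controlled by $\|\dist(\nabla_h u,\SO 3)\|_{L^2(\Omega)}^2$.

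For the last term, I observe that, for $|\xi|_\infty < h$, the set where $\tau_\xi R \ne R$ is a union, over adjacent pairs $(C_x, C_{x'})$, of thin strips of measure at most $C|\xi| h$ along each shared edge. On each such strip the squared jump equals $|R_x - R_{x'}|^2$, so
\begin{equation*}
\|\tau_\xi R - R\|^2_{L^2(S')} \;\le\; C|\xi| h \sum_{(x,x')}|R_x - R_{x'}|^2 \;\le\; \frac{C|\xi|}{h}\int_\Omega \dist^2(\nabla_h u,\SO 3),
\end{equation*}
and the hypothesis $|\xi|_\infty < h$ absorbs the $|\xi|/h$ factor. The main subtlety I expect is making sure all rigidity constants (on single cubes and on pairs of adjacent cubes, both interior and near $\partial S$ if needed) are independent of $h$; this comes down to scale invariance applied to a finite list of fixed shapes, together with the safety margin $\dist(S',\partial S) > ch$ ensuring that only fully interior cubes enter the estimates on $S'$.
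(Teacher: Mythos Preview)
Your argument is correct and is precisely the approach the paper has in mind: the paper does not prove this lemma but cites \cite[Theorem~6]{FJM-06} and remarks that the proof ``consists in dividing $S$ in small cubes of size $h$, applying the theorem on geometric rigidity on each cube to obtain the mapping $R$ \ldots\ and then mollifying the mapping $R$ to obtain the mapping $\tilde R$.'' Your cube-by-cube rigidity, adjacent-cube comparison, mollification, and strip estimate for the difference quotient exactly reproduce that construction.
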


The following two lemmas give us the characterization of the  two scale limit of the oscillatory part of the sequence $(\nabla' \tilde{R}^h)_{h>0}$. The first lemma uses the fact that it is the sequence which is in $L^2$ norm close to the sequence of rotational field (closer than the order of the oscillations) and thus the oscillatory part of two scale limit (beside being of the gradient form in the fast variable) at every point $x$ is tangential to the rotational limit field. The second lemma uses additional fact  that $\tilde{R}^h$ is close in $L^2$ norm to the gradient field (closer than the order of oscillations) and thus the oscillatory part of two scale limit should be of the form of the second gradient. Combining it with the result of the first lemma we have a specific form of two scale limit of the oscillatory part which is given in the second lemma.
\begin{lemma} \label{rotations}
Let $(R^h)_{h>0} \subset  L^\infty(\tilde{S},\SO 3)$ and $(\t{R}^h)_{h>0} \subset H^1(\tilde{S},\M^3)$ satisfy
for each $h>0$
\begin{equation}\label{eq:0001}
\| \t{R}^h-R^h \|_{L^2} \leq C\eta(h), \ \| \nabla' \t{R}^h \|_{L^2} \leq C, \|\t{R}^h\|_{L^\infty} \leq C,
\end{equation}
where $C>0$ is independent of $h$ and $\eta(h)$ satisfies $\lim_{h \to 0} \tfrac{\eta(h)}{\eh}=0$.
Then for any subsequence of $(\nabla'\t{R}^h)_{h>0}$ which converges two scale there exists a unique $w \in L^2(\tilde{S}, \mathring{H}^1 (\mathcal{Y},\R^3))$ such that
$\partial_\alpha \t{R}^h \wtto \partial_\alpha R+ RA_{\partial_{y_{\alpha}} w(x,y)},$
where $R \in H^1(\tilde{S},\SO 3)$ is the weak limit of $\t{R}^h$.
\end{lemma}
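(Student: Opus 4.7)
The plan is to combine three ingredients: the standard $H^1$ two-scale decomposition, a cancellation that exploits the quantitative closeness of $\tilde R^h$ to the rotation field $R^h$, and a potential-theoretic step on the torus. Since $\tilde R^h$ is bounded in $H^1(\tilde S,\M^3)$ (as $\|\tilde R^h\|_{L^\infty}+\|\nabla'\tilde R^h\|_{L^2}\le C$), after extracting a further subsequence if necessary I may assume $\tilde R^h\rightharpoonup R$ weakly in $H^1$, strongly in $L^2$, and a.e.\ on $\tilde S$. The bound $\|\tilde R^h-R^h\|_{L^2}\le C\eta(h)\to 0$ forces $R$ to agree a.e.\ with the $L^2$-limit of $R^h$, hence $R(x)\in\SO 3$ a.e. The standard two-scale result for bounded $H^1$ sequences then yields $\Phi\in L^2(\tilde S,\mathring H^1(\mathcal Y,\M^3))$ such that $\partial_\alpha\tilde R^h\wtto \partial_\alpha R + \partial_{y_\alpha}\Phi$ for $\alpha=1,2$.

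The heart of the argument is to show that $R^T\partial_{y_\alpha}\Phi$ takes values in $\so 3$, so that $\omega_\alpha$ can be defined as its axial vector via $R^T\partial_{y_\alpha}\Phi=A_{\omega_\alpha}$. The idea is to pass to the two-scale limit in the identity
\[
\partial_\alpha\bigl[(\tilde R^h)^T\tilde R^h - I\bigr] = (\partial_\alpha\tilde R^h)^T\tilde R^h+(\tilde R^h)^T\partial_\alpha\tilde R^h.
\]
Testing the left side against $\psi(x,x'/\eh)$ for arbitrary $\psi\in C^\infty_0(\tilde S,C^\infty(\mathcal Y,\M^3))$ and integrating by parts, the resulting macroscopic contribution is $O(\eta(h))$ and the microscopic one is $O(\eta(h)/\eh)$ (using the direct consequence $\|(\tilde R^h)^T\tilde R^h-I\|_{L^2}\le C\eta(h)$ of \eqref{eq:0001}); both vanish in the limit thanks to the assumption $\eta(h)=o(\eh)$. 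The right side, by weak-times-strong two-scale convergence (strong $L^2$ convergence $\tilde R^h\to R$ against weak two-scale convergence of $\partial_\alpha\tilde R^h$), converges to
\[
\iint_{\tilde S\times Y}\bigl[(\partial_\alpha R+\partial_{y_\alpha}\Phi)^T R + R^T(\partial_\alpha R+\partial_{y_\alpha}\Phi)\bigr]\cdot\psi\,dy\,dx.
\]
Cancelling $(\partial_\alpha R)^T R+R^T\partial_\alpha R=\partial_\alpha(R^T R)=0$ and using the arbitrariness of $\psi$ yields $R^T\partial_{y_\alpha}\Phi+(\partial_{y_\alpha}\Phi)^T R=0$ a.e.\ on $\tilde S\times Y$, as desired.

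Finally I produce $w$ with $\partial_{y_\alpha}w=\omega_\alpha$. Because $R$ is independent of $y$, the distributional identity $\partial_{y_1}\partial_{y_2}\Phi=\partial_{y_2}\partial_{y_1}\Phi$ translates, after multiplying by $R^T$ and inverting the axial map, into $\partial_{y_1}\omega_2=\partial_{y_2}\omega_1$ on $\mathcal Y$; each $\omega_\alpha$ also has zero $y$-mean, inherited from $\partial_{y_\alpha}\Phi$. A standard de~Rham/Fourier argument on the two-torus then produces $w(x,\cdot)\in\mathring H^1(\mathcal Y,\R^3)$ with $\partial_{y_\alpha}w=\omega_\alpha$, and the componentwise Fourier formula gives measurable dependence on $x$, so $w\in L^2(\tilde S,\mathring H^1(\mathcal Y,\R^3))$. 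Uniqueness is immediate: any two admissible choices differ by a $y$-independent, mean-zero function, which must vanish. The main obstacle is the tangency step in the middle paragraph: the hypothesis $\eta(h)\ll\eh$ enters essentially, since under a weaker bound the $O(\eta(h)/\eh)$ term from the microscopic derivative of $\psi$ would not vanish and the skew-symmetry of $R^T\partial_{y_\alpha}\Phi$, hence the existence of the axial vector $\omega_\alpha$, would fail.
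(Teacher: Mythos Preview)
Your proof is correct and follows essentially the same route as the paper's: both arguments test the identity $\partial_\alpha[(\tilde R^h)^T\tilde R^h-I]=2\sym[(\tilde R^h)^T\partial_\alpha\tilde R^h]$ against two-scale test functions, use the bound $\|(\tilde R^h)^T\tilde R^h-I\|_{L^2}\lesssim\eta(h)$ together with $\eta(h)\ll\eh$ to kill the left side, and use the $L^\infty$-bound plus convergence in measure of $\tilde R^h$ (your ``weak-times-strong'' step is exactly Lemma~\ref{mnozenjets}) to pass to the limit on the right. The only cosmetic difference is in the final potential step: the paper observes directly that $R^T\partial_{y_\alpha}\Phi=\partial_{y_\alpha}(R^T\Phi)$ is skew for both $\alpha$, hence $\sym(R^T\Phi)$ is $y$-constant and therefore vanishes, so one may simply take $w$ to be the axial vector of $R^T\Phi$; your de~Rham/Fourier construction of $w$ from the curl-free field $(\omega_1,\omega_2)$ is equivalent but a touch less direct.
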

\begin{proof}
We have to prove only that
\begin{equation} \label{eq:0003}
M^h_\alpha:=\sym [(\t{R}^h)^T \partial_\alpha \t{R}^h] \wtto \sym (R^T \partial_\alpha R)=0, \text{ for } \alpha=1,2.
\end{equation}
The rest is a direct consequence of (v) in Lemma \ref{kompts}. Namely, let us assume (\ref{eq:0003}). Then we conclude that for $\alpha=1,2$ there exists $\t{w}_\alpha \in L^2(S,\mathring{H}^1(\mathcal{Y},\R^3))$ such that
\begin{equation}\label{eq:0004}
(\t{R}^h)^T \partial_\alpha \t{R}^h \wtto R^T \partial_\alpha R+A_{\t{w}_\alpha}.
\end{equation}
Using the fact that $\t{R}^h \to R$,  boundedly in measure and Lemma \ref{mnozenjets} we conclude from (\ref{eq:0004}) that
\begin{equation}\label{eq:0005}
\partial_\alpha \t{R}^h \wtto  \partial_\alpha R+RA_{\t{w}_\alpha(x,y)}.
\end{equation}
By using (v) of Lemma \ref{kompts} we can also conclude that there exists $M \in L^2({\tilde{S}},\mathring{H}^1(\mathcal{Y},\M^3))$ such that
\begin{equation}\label{eq:0006}
\partial_\alpha \t{R}^h \wtto  \partial_\alpha R+\partial_{y_\alpha} M.
\end{equation}
From (\ref{eq:0005}) and (\ref{eq:0006}) we conclude that
$$A_{\t{w}_\alpha(x,y)}=\partial_{y_\alpha}(R^T M),$$
which implies that $\t{w}_\alpha=\partial_\alpha w$, where $w$ is the axial vector of $\skw (R^T M)$.

It remains to prove (\ref{eq:0003}). Notice that
\begin{equation} \label{eq:0002}
M^h_{\alpha,ij}=\tfrac{1}{2}\partial_\alpha (\t{R}^h_i \cdot \t{R}^h_j).
\end{equation}
Take $\psi \in  C_0^\infty
(\tilde{S},\mathring{C}^\infty(\mathcal{Y};\R^3))$ and calculate
\begin{eqnarray*}
\lim_{h \to 0} \int_{{\tilde{S}}} M^h_{\alpha,ij} \psi(\cdot,\tfrac{\cdot}{\eh}) \ud x &=& \lim_{h \to 0} \int_{{\tilde{S}}} \tfrac{1}{2} (\t{R}^h_i \cdot \t{R}^h_j) \partial_{x_\alpha} \psi(\cdot,\tfrac{\cdot}{\eh})\\ & &+\lim_{h \to 0} \int_{{\tilde{S}}} \tfrac{1}{2 \eh} (\t{R}^h_i \cdot \t{R}^h_j) \partial_{y_\alpha} \psi(\cdot,\tfrac{\cdot}{\eh})\\ \begin{array}{c} \text{using (\ref{eq:0001}) and} \\ \text{Cauchy inequality} \end{array}  &=& \lim_{h \to 0} \int_{{\tilde{S}}} \tfrac{1}{2} (R^h_i \cdot R^h_j) \partial_{x_\alpha} \psi(\cdot,\tfrac{\cdot}{\eh})\\ & &+\lim_{h \to 0} \int_{{\tilde{S}}} \tfrac{1}{2 \eh} (R^h_i \cdot R^h_j) \partial_{y_\alpha} \psi(\cdot,\tfrac{\cdot}{\eh})\\ &=& \int_{{\tilde{S}}} \partial_\alpha (\delta_{ij}) \psi(\cdot,\tfrac{\cdot}{\eh}) \\ &=& 0.
\end{eqnarray*}
This together with (iii) of Lemma \ref{kompts} implies (\ref{eq:0003}).
\end{proof}
\begin{lemma} \label{glavnapomoc}
Let  $(\overline{u}^h)_{h>0} \subset H^1(\tilde{S},\R^3)$, $(R^h)_{h>0} \subset L^\infty(\tilde{S},\SO 3)$ and $(\t{R}^h)_{h>0} \subset H^1(\tilde{S},\M^3)$ satisfy for each $h>0$:
\begin{eqnarray}\label{eq:0000}
&& \| \nabla' \overline{u}^h-(R^h e_1,R^h e_2)\|_{L^2}+\|R^h-\t{R}^h\|_{L^2}+\| \nabla' \t{R}^h \|_{L^2} \leq C \eta(h), \\ \nonumber
&& \|\t{R}^h\|_{L^\infty} \leq C.
\end{eqnarray}
where $C>0$ is independent of $h$ and $\eta(h)$ satisfies $\lim_{h \to 0} \tfrac{\eta(h)}{\eh}=0$. Then
for any subsequence of $(\nabla'\t{R}^h)_{h>0}$ which converges two scale
there exists a unique $w \in L^2(\tilde{S},\mathring{H}^2(\mathcal{Y}))$ such that
\begin{equation}
\partial_\alpha \t{R}^h \wtto \partial_\alpha R+ R \left( \begin{array}{ccc} 0& 0&-\partial_{y_1 y_\alpha} w \\
0 & 0 & -\partial_{y_2 y_\alpha} w \\ \partial_{y_1 y_\alpha} w & \partial_{y_2 y_\alpha} w & 0
 \end{array} \right),
\end{equation}
where $R$ is the weak limit of $\t{R}^h$ in $H^1$.
\end{lemma}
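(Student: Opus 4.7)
My strategy is to first invoke Lemma~\ref{rotations}, whose hypotheses are contained in those of Lemma~\ref{glavnapomoc}, to produce some $\tilde{w}\in L^2(\tilde{S},\mathring{H}^1(\mathcal{Y},\R^3))$ with
\begin{equation*}
\partial_\alpha \t{R}^h \wtto \partial_\alpha R + R A_{\partial_{y_\alpha}\tilde{w}},\qquad \alpha=1,2.
\end{equation*}
I then exploit the extra information carried by $\overline{u}^h$ to constrain $\tilde{w}$, and finally use Lemma~\ref{lm:00} to identify the scalar potential~$w$.

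\textbf{Compatibility from symmetric second derivatives.} Write $\partial_\alpha \overline{u}^h=\t{R}^h e_\alpha+t^h_\alpha$; by the hypotheses one has $\|t^h_\alpha\|_{L^2}\leq C\eta(h)$. The distributional equality $\partial_\beta\partial_\alpha \overline{u}^h=\partial_\alpha\partial_\beta \overline{u}^h$ rewrites as
\begin{equation*}
(\partial_\beta \t{R}^h)e_\alpha-(\partial_\alpha \t{R}^h)e_\beta = \partial_\alpha t^h_\beta - \partial_\beta t^h_\alpha.
\end{equation*}
I test this against $\psi(x)\chi(x'/\eh)v$ with $\psi\in C_0^\infty(\tilde{S})$, $\chi\in\mathring{C}^\infty(\mathcal{Y})$, $v\in\R^3$. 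On the right-hand side, transferring the derivative onto the test function produces pieces bounded by $C\|t^h\|_{L^2}$ and $\tfrac{C}{\eh}\|t^h\|_{L^2}\le C\tfrac{\eta(h)}{\eh}$ respectively, both of which vanish as $h\to 0$ by the assumption $\eta(h)=o(\eh)$. On the left-hand side, Lemma~\ref{rotations} gives the two-scale limit, and the $y$-independent contribution $(\partial_\beta R)e_\alpha-(\partial_\alpha R)e_\beta$ disappears when paired with the mean-zero $\chi$. Since the residual oscillating expression also has zero $y$-mean, testing against arbitrary mean-zero $\chi$ and $v\in\R^3$, and invoking the invertibility of $R$, yields
\begin{equation*}
A_{\partial_{y_\beta}\tilde{w}}e_\alpha=A_{\partial_{y_\alpha}\tilde{w}}e_\beta\qquad\text{a.e.\ on }\tilde{S}\times Y,\ \alpha,\beta\in\{1,2\}.
\end{equation*}

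\textbf{Identification of the scalar potential.} Using $A_v e=v\times e$ and unpacking the case $\alpha=1,\beta=2$ componentwise yields $\partial_{y_1}\tilde{w}_3=\partial_{y_2}\tilde{w}_3=0$ together with $\partial_{y_1}\tilde{w}_1+\partial_{y_2}\tilde{w}_2=0$. The first pair, combined with the zero-mean normalization built into $\mathring{H}^1(\mathcal{Y})$, forces $\tilde{w}_3\equiv 0$. The divergence-free condition on $(\tilde{w}_1,\tilde{w}_2)$ together with Lemma~\ref{lm:00} produces a unique $p\in L^2(\tilde{S},\mathring{H}^2(\mathcal{Y}))$ with $(\tilde{w}_1,\tilde{w}_2)=(-\partial_{y_2}p,\partial_{y_1}p)$. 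Setting $w:=-p$ and computing $A_{\partial_{y_\alpha}\tilde{w}}$ from the explicit formula for $A_v$ reproduces exactly the matrix displayed in the conclusion. Uniqueness of $w$ follows from the uniqueness of $\tilde{w}$ given by Lemma~\ref{rotations} and of $p$ given by Lemma~\ref{lm:00}.

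\textbf{Main obstacle.} The delicate point is the limit passage in the distributional identity: the factor $1/\eh$ generated when $\partial_\alpha$ falls on the oscillating test function $\chi(\cdot/\eh)$ has to be controlled against the smallness of the errors $t^h_\alpha$. This is precisely where the sharp scaling $\eta(h)=o(\eh)$ enters; without it the compatibility relation linking $\tilde{w}$ to a scalar potential could not be extracted, and one would only recover the weaker conclusion of Lemma~\ref{rotations}. Every other step is either algebraic unpacking of the skew-symmetric identity or a direct invocation of previously established lemmas.
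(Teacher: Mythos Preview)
Your proof is correct, and it takes a genuinely different route from the paper's.

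\textbf{Comparison.} Both proofs start identically by invoking Lemma~\ref{rotations} to obtain $\tilde w\in L^2(\tilde S,\mathring H^1(\mathcal Y,\R^3))$ with $\partial_\alpha\t R^h\wtto \partial_\alpha R+RA_{\partial_{y_\alpha}\tilde w}$. From there the arguments diverge. The paper appeals to Lemma~\ref{pomoooc} (quoted from \cite{Velcic-12}), which states abstractly that if a bounded $H^1$ sequence is $o(\eh)$-close in $L^2$ to a gradient sequence, then the oscillatory part of the two-scale limit of its gradient is a Hessian $\nabla_y^2 v$. Applying this to $(\t R^h e_1,\t R^h e_2)$, which is $o(\eh)$-close to $\nabla'\overline u^h$, produces $v\in L^2(\tilde S,\mathring H^2(\mathcal Y,\R^3))$; setting $\tilde v=R^Tv$ and matching entries forces $\tilde w_3=0$ and identifies $w=\tilde v_3$.

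You instead work directly: the distributional identity $\partial_\beta\partial_\alpha\overline u^h=\partial_\alpha\partial_\beta\overline u^h$, tested against $\psi(x')\chi(x'/\eh)v$ with $\chi$ of zero mean, kills the error terms $t^h_\alpha$ thanks to $\eta(h)=o(\eh)$, and yields the pointwise compatibility $A_{\partial_{y_\beta}\tilde w}e_\alpha=A_{\partial_{y_\alpha}\tilde w}e_\beta$. Unpacking this gives $\tilde w_3=0$ and $\operatorname{div}_y(\tilde w_1,\tilde w_2)=0$, and the scalar potential $w$ is then supplied by the torus Poincar\'e lemma (your Lemma~\ref{lm:00}).

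\textbf{What each approach buys.} Your argument is more self-contained: it avoids the external black box Lemma~\ref{pomoooc} and makes transparent exactly where the scaling hypothesis $\eta(h)/\eh\to 0$ is used. The paper's route is shorter once Lemma~\ref{pomoooc} is granted, and it yields the potential $w$ as a component of an $\R^3$-valued $\tilde v$ rather than via a separate stream-function construction. In substance your compatibility computation is essentially an inline proof of the relevant special case of Lemma~\ref{pomoooc}; both arguments rest on the same mechanism, namely that being close to a gradient forces the fast-variable corrector to satisfy a curl-type constraint. One caveat: the statement you cite as Lemma~\ref{lm:00} sits inside a commented-out block in the paper's source, so in a final version you should either restate it or give an independent reference for the fact that a divergence-free field in $\mathring H^1(\mathcal Y,\R^2)$ admits a stream function in $\mathring H^2(\mathcal Y)$.
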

\begin{proof}
From (\ref{eq:0000}) we conclude that there exists $R:S \to \SO 3$ and $u \in H^2(S,\R^3)$ such that $Re_\alpha=\partial_\alpha u$, for $\alpha=1,2$, $\t{R}^h \rightharpoonup R$ weakly in $H^1$, $\partial_\alpha \overline{u}^h \to Re_\alpha$, $R^h \to R$ strongly in $L^2$. Also from (\ref{eq:0000}) and Lemma \ref{rotations} we have that there exists $\t{w} \in L^2(\tilde{S},\mathring{H}^1(\mathcal{Y},\R^3))$ such that for $\alpha=1,2$
\begin{equation}
\partial_\alpha \t{R}^h \wtto \partial_\alpha R+R A_{\partial_{y_\alpha} \t{w}}.
\end{equation}
Using Lemma \ref{pomoooc} we conclude that there exists $v \in  L^2(\tilde{S},\mathring{H}^2(\mathcal{Y},\R^3))$ such that
\begin{equation}
R\left(\begin{array}{cc} 0 & -\partial_{y_\alpha} \t{w}_3 \\ \partial_{y_\alpha} \t{w}_3 & 0 \\ -\partial_{y_\alpha} \t{w}_2 & \partial_{y_\alpha} \t{w}_1 \end{array} \right) = \left(\begin{array}{cc} \partial_{y_1 y_\alpha} v_1  & \partial_{y_2 y_\alpha} v_1 \\ \partial_{y_1 y_\alpha} v_2  & \partial_{y_2 y_\alpha} v_2 \\ \partial_{y_1 y_\alpha} v_3  & \partial_{y_2 y_\alpha} v_3\end{array} \right).
\end{equation}
By putting $\t{v}=R^T v$ we have that for $\alpha=1,2$
\begin{equation}
\left(\begin{array}{cc} 0 & -\partial_{y_\alpha} \t{w}_3 \\ \partial_{y_\alpha} \t{w}_3 & 0 \\ -\partial_{y_\alpha} \t{w}_2 & \partial_{y_\alpha} \t{w}_1 \end{array} \right) = \left(\begin{array}{cc} \partial_{y_1 y_\alpha} \t{v}_1  & \partial_{y_2 y_\alpha} \t{v}_1 \\ \partial_{y_1 y_\alpha} \t{v}_2  & \partial_{y_2 y_\alpha} \t{v}_2 \\ \partial_{y_1 y_\alpha} \t{v}_3  & \partial_{y_2 y_\alpha} \t{v}_3\end{array} \right).
\end{equation}
From this one easily concludes that $\t{w}_3=0$ which implies the claim by defining $w=\t{v}_3$.
\end{proof}
The following lemma gives us criterium how to recognize oscillations which have the form of symmetric gradient field. It is enough to test it with the specific test functions from the orthogonal complement (whose linear combinations are dense in the orthogonal complement).
This lemma was already used in \cite{Hornungvel12} and can be easily proved by Fourier transform. Here we use it to prove Lemma \ref{lema:glavna1}.
\begin{lemma} \label{lm:1}
Let  $M \in L^2(\tilde{S};L^2(\mathcal{Y},\M^2_{\sym})$ such that
for every
$$
\Psi \in C_0^{\infty}(\tilde{S}, C^{\infty}(\mathcal{Y};\M^{2}_{\sym})),
$$
which satisfies
\begin{equation}
\label{eq:iggg10}\Psi(\cdot,y)=(\cof \nabla^2 F)(y) \psi (\cdot),
\end{equation}
for some $\psi \in C_0^\infty(\tilde{S})$, $F\in
\mathring{C}^\infty(\mathcal{Y})$,
we have that
$$\iint_{\tilde{S} \times \mathcal{Y}} M(\cdot,y): \Psi(\cdot,y)=0.$$
Then there exist unique  $M_0 \in L^2(\tilde{S},\M^2_{\sym})$ and
$\zeta \in L^2(\tilde{S},\mathring{H}^1(\mathcal{Y};\R^2))$ such that
$$M=M_0+\sym \nabla_y \zeta. $$
\end{lemma}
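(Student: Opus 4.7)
The plan is to reduce the statement to a mode-by-mode linear-algebraic identity via Fourier series in the fast variable $y$. First set $M_0(x) := \int_Y M(x,y)\,dy$ and $\widetilde M(x,y) := M(x,y) - M_0(x)$, so that $\widetilde M \in L^2(\tilde S, L^2(\mathcal Y; \M^2_{\sym}))$ has zero $y$-mean. Since $\int_Y \sym \nabla_y \zeta\,dy = 0$ for every $y$-periodic $\zeta$, it suffices to find $\zeta \in L^2(\tilde S, \mathring H^1(\mathcal Y;\R^2))$ with $\widetilde M = \sym \nabla_y \zeta$. Expanding in Fourier series $\widetilde M(x,y) = \sum_{k \in \Z^2 \setminus \{0\}} \hat M(x,k)\, e^{2\pi i k \cdot y}$ and using
\[
\cof \nabla_y^2\, e^{2\pi i k \cdot y} = -(2\pi)^2\, \cof(k \otimes k)\, e^{2\pi i k \cdot y},
\]
the hypothesis, applied to $\Psi(x,y) = \psi(x)\cof \nabla_y^2 F(y)$ with $F \in \{\cos(2\pi k \cdot y),\sin(2\pi k\cdot y)\}$ and $\psi \in C_0^\infty(\tilde S)$ arbitrary, translates via Fourier inversion into the pointwise orthogonality
\[
\hat M(x,k) : \cof(k \otimes k) = 0 \qquad\text{for a.e. } x \in \tilde S\text{ and every } k \in \Z^2\setminus\{0\}.
\]

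The second step is linear algebra, performed one mode at a time. Fix $k \ne 0$ and consider the linear map $T_k : \mathbb C^2 \to \mathbb C^{2\times 2}_{\sym}$ given by $T_k(v) := \sym(v \otimes k)$. A direct computation shows $T_k(v) : \cof(k \otimes k) = 0$ for every $v \in \mathbb C^2$. Moreover $T_k$ is injective, since $T_k(v) = 0$ implies $v \otimes k$ is skew, which for $k \ne 0$ forces $v = 0$. Hence the image of $T_k$ is a two-dimensional subspace of the three-dimensional $\mathbb C^{2\times 2}_{\sym}$, contained in the two-dimensional orthogonal complement of the nonzero vector $\cof(k\otimes k)$; the two subspaces must therefore coincide. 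Consequently there exists a unique $\hat\zeta(x,k) \in \mathbb C^2$ with
\[
\hat M(x,k) = 2\pi i\, T_k\bigl(\hat\zeta(x,k)\bigr) = 2\pi i\, \sym\bigl(\hat\zeta(x,k) \otimes k\bigr).
\]
The homogeneity $T_k = |k|\, T_{k/|k|}$, together with compactness of the unit circle and injectivity of $T_e$ for every $e \in S^1$, yields the uniform bound $|\hat\zeta(x,k)| \lesssim |k|^{-1} |\hat M(x,k)|$. Measurability of $x \mapsto \hat\zeta(x,k)$ is automatic, as it is obtained by applying a fixed ($k$-dependent) linear inverse to $\hat M(\cdot,k)$.

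Finally, I define $\zeta(x,y) := \sum_{k \ne 0} \hat\zeta(x,k)\, e^{2\pi i k \cdot y}$; the Hermitian symmetry $\hat M(x,-k) = \overline{\hat M(x,k)}$ transfers to $\hat \zeta$ by uniqueness, so $\zeta$ is real. By Plancherel and the above bound,
\[
\|\zeta(x,\cdot)\|_{\mathring H^1(\mathcal Y)}^2 \lesssim \sum_{k \ne 0} |k|^2 |\hat\zeta(x,k)|^2 \lesssim \sum_{k \ne 0} |\hat M(x,k)|^2 = \|\widetilde M(x,\cdot)\|_{L^2(Y)}^2,
\]
and integration in $x$ shows $\zeta \in L^2(\tilde S, \mathring H^1(\mathcal Y;\R^2))$ with $\sym\nabla_y\zeta = \widetilde M$. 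Uniqueness of the decomposition follows by the same argument: if $M_0 + \sym\nabla_y\zeta = 0$, then averaging in $y$ forces $M_0 = 0$, and the resulting identity $T_k(\hat\zeta(x,k)) = 0$ for all $k \ne 0$ gives $\zeta = 0$ by injectivity of $T_k$. I expect the only mildly delicate point to be the sharp $|k|^{-1}$ bound on the inverse of $T_k$ restricted to its range, which is what guarantees $\zeta$ has an $H^1$ (and not merely $L^2$) norm controlled by $M$; this bound is forced by the degree-one homogeneity of $T_k$ in $k$, and everything else is essentially formal once the assumption has been decoded on each Fourier mode.
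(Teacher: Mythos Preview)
Your proof is correct and follows exactly the route the paper indicates: the paper does not give a detailed argument but states that the lemma ``can be easily proved by Fourier transform,'' and your mode-by-mode analysis via Fourier series in $y$---reducing the hypothesis to $\hat M(x,k):\cof(k\otimes k)=0$ and then identifying the orthogonal complement of $\cof(k\otimes k)$ in $\M^2_{\sym}$ with the range of $v\mapsto\sym(v\otimes k)$---is precisely such a proof. The homogeneity argument giving the $|k|^{-1}$ bound, and hence the $\mathring H^1$ regularity of $\zeta$, is the only point requiring care, and you handle it correctly.
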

The following lemma is crucial for the proof of the Proposition \ref{P:strain}.
\begin{lemma} \label{lema:glavna1}
Let Assumption \ref{assumption:gamma} be satisfied. Let $(\t{u}^h)_{h>0} \subset H^2(\tilde{S},\R^3)$, $(\t{R}^h)_{h>0} \subset H^1(\tilde{S},\M^3)$ and $(R^h)_{h>0} \subset L^\infty(\tilde{S},\SO 3)$ such that for each $h>0$
$R^h$ is piecewise constant on each cube $x + hY$ with $x\in h\Z^2$ and for each $\xi \in \R^2$ which satisfy $|\xi|_\infty=\max\{|\xi \cdot e_1|,|\xi \cdot e_2|\}<h$ we have
\begin{eqnarray}\label{assigor}
&& h^2\|\nabla'^2 \t{u}^h\|^2_{L^2(\tilde{S})}+\|\nabla' \t{u}^h - (R^h e_1, R^h e_2)\|^2_{L^2(\tilde{S})} + \|R^h - \t{R}^h\|^2_{L^2(\tilde{S})}\\ \nonumber
&&+h^2 \|R^h - \t{R}^h\|^2_{L^\infty(\tilde{S})} + h^2\|\nabla'\t{R}^h\|^2_{L^2(\tilde{S})}+\|\tau_{\xi} R^h-R^h\|^2_{L^2(\tilde{S}^h)} \leq Ch^2,
\end{eqnarray}
for some $C>0$ and for each  sequence of subdomains $\tilde{S}^h \subset \tilde{S}$ which satisfy $\dist(\tilde{S}^h,\partial \tilde{S}) \geq ch$ for some $c>0$.
Then  there exist $M_0 \in L^2(\tilde{S},\M^2_{\sym})$ and  $\zeta \in L^2(\tilde{S},\mathring{H}^1(\mathcal{Y},\R^2))$ such that for $\alpha,\beta=1,2$ (on a subsequence) we have:
\begin{eqnarray*}
  M^h_{\alpha \beta} &:=& \tfrac{1}{2h}[(R^he_\alpha) \cdot  \partial_\beta \t{u}^h+(R^he_\beta) \cdot  \partial_\alpha \t{u}^h]-\delta_{\alpha \beta}\\ &\wtto& M_{0,\alpha \beta} +\tfrac{1}{2}(\partial_{y_\alpha} \zeta_{\beta}+\partial_{y_\beta}\zeta_\alpha).
 \end{eqnarray*}
\end{lemma}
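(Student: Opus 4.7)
The strategy is to extract a two-scale subsequential limit $M$ of $M^h$ and then to verify, via the criterion of Lemma~\ref{lm:1}, the structural form $M = M_0 + \sym\nabla_y\zeta$. From the algebraic identity
\[
hM^h_{\alpha\beta} = \tfrac{1}{2}\bigl[R^h_\alpha\cdot(\partial_\beta\tilde u^h - R^h e_\beta) + R^h_\beta\cdot(\partial_\alpha\tilde u^h - R^h e_\alpha)\bigr],
\]
together with $|R^h|\leq\sqrt 3$ and the bound $\|\nabla'\tilde u^h - (R^he_1,R^he_2)\|_{L^2(\tilde S)}\lesssim h$ from \eqref{assigor}, one obtains $\|M^h\|_{L^2(\tilde S)}\lesssim 1$ and hence, along a subsequence, $M^h\wtto M$ with $M \in L^2(\tilde S\times Y,\M^2_{\sym})$. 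By Lemma~\ref{lm:1} the desired decomposition follows once
\[
I^h := \int_{\tilde S} M^h(x) : (\cof\nabla_y^2 F)(x'/\eh)\,\psi(x)\,dx \longrightarrow 0
\]
is established for every $F\in\mathring C^\infty(\mathcal Y)$ and $\psi\in C_0^\infty(\tilde S)$.

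\textbf{Exploiting the divergence-free structure.} Set $G := \cof\nabla_y^2 F$. Because $G$ is symmetric, $I^h = \int A^h:\Psi^h\,dx$ with $A^h_{\alpha\beta} := \tfrac{1}{h}(R^h_\alpha\cdot\partial_\beta\tilde u^h - \delta_{\alpha\beta})$. The orthogonality $R^h_\alpha\cdot R^he_\beta = \delta_{\alpha\beta}$ peels off a constant-type term $-\tfrac{1}{h}\int\Delta_y F(x'/\eh)\psi\,dx = -\tfrac{\eh^2}{h}\int F(x'/\eh)\Delta\psi\,dx = O(\eh^2/h)\to 0$, using $\eh^2\ll h$. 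For the main piece I introduce the mean-zero stream potentials $H_1 := -\partial_{y_2}F$, $H_2 := \partial_{y_1}F$; the row-divergence-free identity $\sum_\beta\partial_{y_\beta} G_{\alpha\beta}=0$ and $\tilde u^h\in H^2$ yield the key identity
\[
\sum_\beta G_{\alpha\beta}(x'/\eh)\,\partial_\beta\tilde u^h = \eh\bigl\{\partial_1[H_\alpha(x'/\eh)\partial_2\tilde u^h] - \partial_2[H_\alpha(x'/\eh)\partial_1\tilde u^h]\bigr\},
\]
which converts the prefactor $1/h$ into $\eh/h$. After replacing $R^h$ by the smooth $\tilde R^h$ (the main obstacle; see below) and integrating by parts against $\tilde R^h\psi\in H^1$, the main term splits into (a) boundary-type pieces involving $\partial\psi$, which gain an extra factor $\eh$ via one more integration by parts in $y$ (using that the mean-zero $H_\alpha$ is itself a $y$-divergence of a bounded periodic field), hence are $O(\eh^2/h)\to 0$; and (b) bulk pieces involving $\partial_\gamma\tilde R^h$. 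For the bulk, Lemma~\ref{rotations} applies here with $\eta(h)=Ch$ since $\|\tilde R^h-R^h\|_{L^2}\lesssim h\ll\eh$, giving $\partial_\gamma\tilde R^h\wtto\partial_\gamma R + RA_{\partial_{y_\gamma}w}$ for some $w\in L^2(\tilde S,\mathring H^1(\mathcal Y,\R^3))$; the combination of this gradient-in-$y$ structure of the oscillatory part with the mean-zero property $\int_Y H_\alpha\,dy = 0$ forces the bulk contribution to vanish in the limit.

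\textbf{Main obstacle: smoothing $R^h$.} The principal difficulty is that $R^h$ is only piecewise constant on $h$-cubes and so carries no weak derivative on which to base the preceding integration by parts. The replacement error
\[
\tfrac{1}{h}\int (R^h - \tilde R^h)_\alpha\cdot\partial_\beta\tilde u^h\,G_{\alpha\beta}(x'/\eh)\psi\,dx
\]
is not small under bare $L^p$-estimates alone (the direct $L^\infty$-bound on $R^h-\tilde R^h$ combined with $L^2$-control of $\partial_\beta\tilde u^h$ only gives $O(1)$). To control it I would exploit simultaneously the $L^2$-bound $\|R^h-\tilde R^h\|_{L^2}\lesssim h$, the $L^\infty$-bound $\|R^h-\tilde R^h\|_{L^\infty}\lesssim 1$, and especially the translation estimate $\|\tau_\xi R^h - R^h\|_{L^2(\tilde S^h)}\lesssim h$ for $|\xi|_\infty<h$ from \eqref{assigor}: since $G(x'/\eh)$ oscillates on the \emph{larger} scale $\eh\gg h$, it is essentially constant over each $h$-cube, and the jumps of $R^h$ between neighbouring cubes contribute an effective factor of order $h/\eh$ upon cellwise averaging against the oscillating test function. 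This is exactly the point at which Assumption~\ref{assumption:gamma} ($\eh^2\ll h\ll\eh$) enters essentially — indeed, as indicated in the introduction, it is the breakdown of precisely this argument in the critical regime $h\sim\eh^2$ that obstructs the analysis there. Once the replacement is justified, combining the vanishings of the boundary and bulk contributions yields $I^h\to 0$, and Lemma~\ref{lm:1} furnishes the required $M_0\in L^2(\tilde S,\M^2_{\sym})$ and $\zeta\in L^2(\tilde S,\mathring H^1(\mathcal Y,\R^2))$.
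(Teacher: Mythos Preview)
Your reduction to Lemma~\ref{lm:1} and the use of the row-divergence-free structure of $\cof\nabla_y^2 F$ are exactly how the paper begins. However, from that point on your execution diverges from the paper's and contains a genuine gap.

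\textbf{The rate problem for the bulk.} After your curl identity the bulk term carries the diverging prefactor $\eh/h$, so you must show that
\[
J^h:=\int_{\tilde S}\big[\partial_1\tilde R^h_\alpha\cdot\partial_2\tilde u^h-\partial_2\tilde R^h_\alpha\cdot\partial_1\tilde u^h\big]\,H_\alpha(\cdot/\eh)\,\psi
\]
is $o(h/\eh)$, not merely that it tends to zero. Your appeal to Lemma~\ref{rotations} gives only the qualitative two-scale limit of $\partial_\gamma\tilde R^h$; combined with strong convergence of $\partial_{\bar\gamma}\tilde u^h$ and mean-zero $H_\alpha$ this indeed shows $\lim J^h=0$, but yields no rate. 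A further integration by parts (using $H_\alpha=\pm\eh\,\partial_\beta[F(\cdot/\eh)]$) would require second derivatives of $\tilde R^h$, which you do not control. This is where your argument breaks down.

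\textbf{The replacement issue.} Your heuristic for passing from $R^h$ to $\tilde R^h$ (``$G(\cdot/\eh)$ is essentially constant on $h$-cubes, so the jumps average out'') is not substantiated; a direct estimate of the replacement error gives only $O(1)$. The paper never makes this replacement in the principal term.

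\textbf{What the paper does instead.} The paper rewrites $(\cof\nabla_y^2 F)(\cdot/\eh)\psi$ as $\eh^2\cof\nabla'^2\big(F(\cdot/\eh)\psi\big)$ minus lower-order terms, obtaining a global prefactor $\eh^2/h\to 0$. For the main piece $\int hM^h:\cof\nabla'^2(F(\cdot/\eh)\psi)$ it integrates by parts \emph{directly} with the piecewise constant $R^h$: on each $h$-cube $R^h$ is constant, so the integration by parts produces (i) a bulk term $I_1^h$ involving second derivatives of $\tilde u^h$, and (ii) sums over cube edges involving the jumps $R^h(x')-R^h(x'+h\xi)$. The bulk $I_1^h$ is shown to be \emph{bounded} (this is where a second integration by parts, after smoothing $\tilde u^h$ to a $C^3$ map, is used), and boundedness suffices because of the prefactor $\eh^2/h$. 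The edge sums are shown to vanish using the translation estimate from \eqref{assigor} together with the \emph{exact} orthonormality of $R^h$: after replacing the edge integrals of $\partial_\gamma\tilde u^h$ by their cube averages and then by $R^h e_\gamma$, one is left with expressions of the type
\[
\big(R^h e_\alpha(x')-R^h e_\alpha(x'+h\xi)\big)\cdot\tfrac12\big(R^h e_\beta(x')+R^h e_\beta(x'+h\xi)\big),
\]
which vanish identically when $\alpha=\beta$ and cancel in pairs when $\alpha\neq\beta$. This symmetrization trick, exploiting that $R^h$ (unlike $\tilde R^h$) is genuinely $\SO 3$-valued, is the key idea you are missing.
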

\begin{proof}
From (\ref{assigor}) we can assume that there exists $u \in H^2(\tilde{S},\R^3)$ and $R \in H^1(\tilde{S},\SO 3)$ such that
$\partial_\alpha u= Re_\alpha$, $\partial_\alpha \t{u}^h \rightharpoonup \partial_\alpha u$ weakly in $H^1$ for $\alpha=1,2$, $\t{R}^h \rightharpoonup R$ weakly in $H^1$ and $R^h \to R$ strongly in $L^2$. Let us suppose that $M^h \wtto M$, for some $M \in L^2(\tilde{S} \times \mathcal{Y},\M^2)$. Using Lemma \ref{lm:1} it is enough to see
that
$$\iint_{\tilde{S} \times Y} M(\cdot,y): \Psi(\cdot,y) =0,$$
where $\Psi$ is defined by (\ref{eq:iggg10}). We have
\begin{align*}
& \iint_{\tilde{S} \times Y} M(\cdot,y): \Psi (\cdot,y)
\\
&=\lim_{h \to 0} \int_{\tilde{S}} M^h: (\cof \nabla^2 F)\left( \tfrac{\cdot}{\eh} \right) \psi
\\
&=\lim_{h \to 0} \tfrac{\eh^2}{h}\int_{\tilde{S}}  hM^h : \cof
\left[
\nabla^2 \left( F\left( \tfrac{\cdot}{\eh} \right) \psi\right)
- 2 \nabla \left(F\left( \tfrac{\cdot}{\eh} \right) \nabla\psi \right) + F\left( \tfrac{\cdot}{\eh}\right) \nabla^2\psi
\right] \\
&=\lim_{h \to 0} \tfrac{\eh^2}{h}\int_{\tilde{S}}  hM^h : \cof
\left[
\nabla^2 \left( F\left( \tfrac{\cdot}{\eh} \right) \psi\right)
- 2 \nabla \left(F\left( \tfrac{\cdot}{\eh} \right) \nabla\psi \right)\right].
\end{align*}
It is easy to conclude that
$$ \lim_{h\to 0} \tfrac{\eh^2}{h}\int_{\tilde{S}}  hM^h : \cof\left[ \nabla \left(F\left( \tfrac{\cdot}{\eh} \right) \nabla\psi \right)\right]=0. $$
Namely, it is enough to conclude that the sequence $$I^h:=\int_{\tilde{S}}  hM^h : \cof\left[ \nabla \left(F\left( \tfrac{\cdot}{\eh} \right) \nabla\psi \right)\right],$$ is bounded. To see this notice that, because of (\ref{assigor}), we have $|I^h-\t{I}^h| \to 0$, where
\begin{equation} \label{eq:0010}
 \t{I}^h:=\int_{\tilde{S}}  M_c^h : \cof\left[ \nabla \left(F\left( \tfrac{\cdot}{\eh} \right) \nabla\psi \right)\right],
\end{equation}
and
$$ M_c^h:=\tfrac{1}{2}[(\t{R}^he_\alpha) \cdot  \partial_\beta \t{u}^h+(\t{R}^he_\beta) \cdot  \partial_\alpha \t{u}^h]. $$
By  partial integration in (\ref{eq:0010}) and the fact that $\|\nabla M_c^h\|_{L^1(\tilde{S})}$ is bounded we easily obtain the boundedness of $\t{I}^h$. From this it follows the boundedness of $I^h$.
It remains to prove that
\begin{equation}
\lim_{h \to 0} \tfrac{\eh^2}{h}\int_{\tilde{S}}  hM^h : \cof
\nabla^2 \left( F\left( \tfrac{\cdot}{\eh} \right) \psi\right)=0.
\end{equation}
By partial integration we obtain for $h$ small enough:
\begin{align}
& \int_{\tilde{S}}  hM^h : \cof
\nabla^2 \left( F\left( \tfrac{\cdot}{\eh} \right) \psi\right)= 
\\ \nonumber &
\int_{\tilde{S}} (R^he_2) \cdot \partial_{11} \t{u}^h \partial_2 \left( F\left( \tfrac{\cdot}{\eh} \right) \psi\right)
-\int_{\tilde{S}} (R^he_2) \cdot \partial_{12} \t{u}^h \partial_1 \left( F\left( \tfrac{\cdot}{\eh} \right) \psi\right)
\\ \nonumber & +
\sum_{x' \in h\Z^2 \cap {\tilde{S}}^h}\left( (R^he_1)(x')-(R^he_1)(x'+h(0,1))\right) \cdot \left( \int_{\Gamma^{2,h}_{x'}} \partial_1 \t{u}^h \partial_{2} \left(F\left( \tfrac{\cdot}{\eh} \right) \psi\right) \right)
\end{align}
\begin{align*}
 \nonumber & -
\sum_{x' \in h\Z^2 \cap {\tilde{S}}^h}\left( (R^he_1)(x')-(R^he_1)(x'+h(1,0))\right) \cdot \left( \int_{\Gamma^{1,h}_{x'}} \partial_2 \t{u}^h \partial_{2} \left(F\left( \tfrac{\cdot}{\eh} \right) \psi\right) \right)\\
\nonumber & -
\sum_{x' \in h\Z^2 \cap {\tilde{S}}^h}\left( (R^he_2)(x')-(R^he_2)(x'+h(1,0))\right) \cdot \left( \int_{\Gamma^{1,h}_{x'}} \partial_1 \t{u}^h \partial_{2} \left(F\left( \tfrac{\cdot}{\eh} \right) \psi\right) \right)
\\
\nonumber & +
\sum_{x' \in h\Z^2 \cap {\tilde{S}}^h}\left( (R^he_2)(x')-(R^he_2)(x'+h(1,0))\right) \cdot \left( \int_{\Gamma^{1,h}_{x'}} \partial_2 \t{u}^h \partial_{1} \left(F\left( \tfrac{\cdot}{\eh} \right) \psi\right) \right),
\end{align*}
where $\Gamma^{1,h}_{x'}$ is the segment $[x'+h(1,0),x'+h(1,1)]$, $\Gamma^{2,h}_{x'}$ is the segment $[x'+h(0,1),x'+h(1,1)]$ and ${\tilde{S}}^h$ is a compact subset of ${\tilde{S}}$ such that $\spt \psi \subset {\tilde{S}}^h$. First we will prove that $\lim_{h \to 0} \tfrac{\eh^2}{h} I_1^h=0$, where
$$ I_1^h=\int_{\tilde{S}} (R^he_2) \cdot \partial_{11} \t{u}^h \partial_2 \left( F\left( \tfrac{\cdot}{\eh} \right) \psi\right)
-\int_{\tilde{S}} (R^he_2) \cdot \partial_{12} \t{u}^h \partial_1 \left( F\left( \tfrac{\cdot}{\eh} \right) \psi\right).$$
To prove this it is enough to prove the boundedness of  the sequence $I_1^h$.
Notice, as before, using (\ref{assigor}) and Cauchy inequality that $|I_1^h-\t{I}_1^h| \to 0$, where
$$ \t{I}_1^h=\int_{\tilde{S}} (\t{R}^he_2) \cdot \partial_{11} \t{u}^h \partial_2 \left( F\left( \tfrac{\cdot}{\eh} \right) \psi\right)
-\int_{\tilde{S}} (\t{R}^he_2) \cdot \partial_{12} \t{u}^h \partial_1 \left( F\left( \tfrac{\cdot}{\eh} \right) \psi\right).$$
By replacing $\t{u}^h$ with a smooth function $\t{u}^h_c \in C^3({\tilde{S}})$ such that $\|\t{u}^h-\t{u}^h_c\|_{H^2} \ll \eh$ we obtain, after partial integration, that $| \t{I}_1^h-\t{I}_{1,c}^h | \to 0$, where
$$ \t{I}_{1,c}^h=-\int_{\tilde{S}} \partial_2 (\t{R}^he_2) \cdot \partial_{11} \t{u}_c^h   F\left( \tfrac{\cdot}{\eh} \right) \psi
+\int_{\tilde{S}} \partial_1 (\t{R}^he_2) \cdot \partial_{12} \t{u}^h_c  F\left( \tfrac{\cdot}{\eh} \right) \psi.$$
Now we easily obtain the boundedness of $\t{I}_{1,c}^h$ which implies the boundedness of $I_1^h$.
We want to show that $\lim_{h \to 0} \tfrac{\eh^2}{h} I_2^h=0$, where
\begin{align*}
 I_2^h:= &
\sum_{x' \in h\Z^2 \cap {\tilde{S}}^h}\left( (R^he_1)(x')-(R^he_1)(x'+h(0,1))\right) \cdot \\ & \left( \int_{\Gamma^{2,h}_{x'}} \partial_1 \t{u}^h \partial_{2} \left(F\left( \tfrac{\cdot}{\eh} \right) \psi\right) \right).
\end{align*}
We will prove even more that $I_2^h \to 0$.
Since $\t{u}^h \in H^2(\Omega_{x'}^h,\R^3)$ we have that
\begin{equation} \label{eq:0020}
\int_{\Gamma^{2,h}_{x'}} \left|\partial_1 \t{u}^h-\tfrac{1}{h} \int_{\Gamma_{\cdot}^{1,h}} \partial_1 \t{u}^h \right|^2 \leq \tfrac{h}{3} \int_{\Omega_{x'}^h} |\partial_{12} \t{u}^h|^2,
\end{equation}
where for $x\in \Gamma_{x'}^{2,h}$ we put $\Gamma_{x}^{1,h}=[x-h(0,1),x]$ and $\Omega_{x'}^h$ is the square of side $h$ whose down left corner is $x'$. From (\ref{assigor}) we easily conclude that for $\alpha=1,2$ and $\xi \in \R^2$, $|\xi|_\infty=1$ we have
\begin{equation} \label{eq:0021}
\sum_{x' \in h\Z^2 \cap {\tilde{S}}^h}\left( (R^he_\alpha)(x')-(R^he_\alpha)(x'+h\xi)\right)^2 \leq C,
\end{equation}
Using Cauchy inequality and (\ref{eq:0020}), (\ref{eq:0021}) we conclude that $|I_2^h-\t{I}_2^h| \to 0$, where
 \begin{align*}
  \t{I}_2^h:= & \sum_{x' \in h\Z^2 \cap {\tilde{S}}^h}\left( (R^he_1)(x')-(R^he_1)(x'+h(0,1))\right) \cdot  \\ & \hspace{5ex} \left( \tfrac{1}{h}\int_{\Omega^h_{x'}} \partial_1 \t{u}^h \partial_{2} \left(F\left( \tfrac{\cdot}{\eh} \right) \psi\right)\circ P^h_{x'} \right),
 \end{align*}
and $P^h_{x'}:\Omega^h_{x'} \to \Gamma^{2,h}_{x'}$ is the projection. From (\ref{assigor}) and Cauchy inequality we can easily conclude that $|\t{I}^h_2-\t{I}^h_{2,c}| \to 0$, where
 \begin{align*}
 \t{I}^h_{2,c} :=&
 \sum_{x' \in h\Z^2 \cap {\tilde{S}}^h}\left( (R^he_1)(x')-(R^he_1)(x'+h(0,1))\right) \cdot \\ & \hspace{5ex}(R^he_1)(x') \int_{\Gamma^{2,h}_{x'}} \partial_2 \left(F\left( \tfrac{\cdot}{\eh} \right) \psi\right).
 \end{align*}
By using (\ref{eq:0021}) we easily obtain that
$|\t{I}^h_{2,c}-\t{I}^h_{2,cc}| \to 0$, where
 \begin{align*}
 \t{I}^h_{2,cc} :=&
 \sum_{x' \in h\Z^2 \cap {\tilde{S}}^h}\left( (R^he_1)(x')-(R^he_1)(x'+h(0,1))\right) \cdot \\ & \tfrac{1}{2}\left((R^he_1)(x')+(R^he_1)(x'+h(0,1))\right)  \int_{\Gamma^{2,h}_{x'}} \partial_2 \left(F\left( \tfrac{\cdot}{\eh} \right) \psi\right)=0.
 \end{align*}
This implies that $I_2^{h} \to 0$.
In the same way we can conclude that $I_4^h \to 0$, where
\begin{align*}
 I_4^h:=& \sum_{x' \in h\Z^2 \cap {\tilde{S}}^h}\left( (R^he_2)(x')-(R^he_2)(x'+h(1,0))\right) \cdot \\ & \hspace{5ex}  \left( \int_{\Gamma^{1,h}_{x'}} \partial_2 \t{u}^h \partial_{1} \left(F\left( \tfrac{\cdot}{\eh} \right) \psi\right) \right).
\end{align*}
It remains to check the part
\begin{align*}
I_3^h :=&
\sum_{x' \in h\Z^2 \cap {\tilde{S}}^h}\left( (R^he_1)(x')-(R^he_1)(x'+h(1,0))\right) \cdot \\ & \hspace{5ex}\left( \int_{\Gamma^{1,h}_{x'}} \partial_2 \t{u}^h \partial_{2} \left(F\left( \tfrac{\cdot}{\eh} \right) \psi\right) \right)\\
\nonumber & -
\sum_{x' \in h\Z^2 \cap {\tilde{S}}^h}\left( (R^he_2)(x')-(R^he_2)(x'+h(1,0))\right) \cdot \\ & \hspace{5ex} \left( \int_{\Gamma^{1,h}_{x'}} \partial_1 \t{u}^h \partial_{2} \left(F\left( \tfrac{\cdot}{\eh} \right) \psi\right) \right).
\end{align*}
 We follow the same pattern to replace $\partial_1 \t{u}^h$  ($\partial_2 \t{u}^h$) by $R^h e_1$ ($R^h e_2$) and obtain that $|I_3^h-I_{3,c}^h| \to 0$, where
\begin{align*}
I_{3,c}^h :=&
\sum_{x' \in h\Z^2 \cap {\tilde{S}}^h}\left( (R^he_1)(x')-(R^he_1)(x'+h(1,0))\right) \cdot \\ & \hspace{5ex} (R^h e_2) (x')\int_{\Gamma^{1,h}_{x'}} \partial_{2} \left(F\left( \tfrac{\cdot}{\eh} \right) \psi\right)\\
\nonumber & +
\sum_{x' \in h\Z^2 \cap {\tilde{S}}^h}\left( (R^he_2)(x')-(R^he_2)(x'+h(1,0))\right) \cdot \\ & \hspace{5ex} (R^h e_1) (x') \int_{\Gamma^{1,h}_{x'}} \partial_{2} \left(F\left( \tfrac{\cdot}{\eh} \right) \psi \right).
\end{align*}
Using again (\ref{eq:0021}) we easily obtain that $|I_{3,c}^h-I_{3,cc}^h| \to 0$, where
\begin{eqnarray*}
I_{3,cc}^h &:=&
\sum_{x' \in h\Z^2 \cap {\tilde{S}}^h}\left( (R^he_1)(x')-(R^he_1)(x'+h(1,0))\right) \cdot \\ & & \hspace{5ex}\tfrac{1}{2} \left((R^h e_2) (x')+(R^he_2)(x'+h(1,0))\right)\int_{\Gamma^{1,h}_{x'}} \partial_{2} \left(F\left( \tfrac{\cdot}{\eh} \right) \psi\right)\\
\nonumber & &+
\sum_{x' \in h\Z^2 \cap {\tilde{S}}^h}\left( (R^he_2)(x')-(R^he_2)(x'+h(1,0))\right) \cdot \\ & &\hspace{5ex}
\tfrac{1}{2} \left((R^h e_1) (x')+(R^he_1)(x'+h(1,0))\right)
 \int_{\Gamma^{1,h}_{x'}} \partial_{2} \left(F\left( \tfrac{\cdot}{\eh} \right) \psi \right)\\ &=& 0.
\end{eqnarray*}
This finishes the proof of the claim.
\end{proof}
\begin{remark}
\label{slucaj1} In the case $\eh^2 \sim h$ the major difficulty is to deal with the expression $\t{I}^h_{1,c}$, since under the integral we have the product of two sequences which are only bounded in $L^2$.
\end{remark}

For proving the Proposition~\ref{P:strain}  we need the following remark, the same as we needed in the proof of \cite[Proposition~3.1]{NeuVel-12}. However here we apply it in completely different situation, to correct the minimizing sequence (averaged over the thickness) to obtain the sequence bounded in $H^2$.
\begin{remark}
  \label{rem:regularity}
Assume that $\t{S} \subset \R^2$  a bounded domain. Let us look the following minimization problem
  \begin{equation*}
    \min_{v\in H^1(\t S)\atop\int_{\t S} v=0}\int_{\t S}|\nabla v-p|^2\,d x',
 \end{equation*}
  where $p \in H^1(\t{S},\R^2)$ is  a given field. The
  associated Euler-Lagrange equation reads
  \begin{equation*}
    \left\{\begin{aligned}
      -\triangle v=\,&\,-\nabla\cdot p \qquad&&\text{in } \t S\\
      \partial_\nu v=\,&\,p \cdot\nu\qquad&&\text{on }\partial \t S,
    \end{aligned}\right.
  \end{equation*}
 subject to $\int_{\t{S}} v\,dx=0$. Above, $\nu$ denotes the normal on
 $\partial \t{S}$. Since $\nabla\cdot p \in L^2$, we obtain by
 standard  regularity estimates that  $v\in H^2 (\t S)$ under the assumption that $\partial\t S$ is $C^{1,1}$. In this case it holds
  $\|v\|_{H^2(\t S)}\lesssim \|\nabla\cdot
  p\|_{L^2(\t S)}+\|p\|_{L^2(\t S)}$.
\end{remark}

\begin{proof}[Proof of Proposition~\ref{P:strain}]
Without loss of generality we assume that all $u^h$ have average zero.
Take $\t{S} \subset S$, open and compactly contained in $S$, such that $\partial \t{S}$ is of class $C^{1,1}$
and  define $\t{\Omega}:=\t{S} \times I$.
Let $R^h$, $\t{R}^h$ be the maps obtained by applying Lemma~\ref{t6f2}
to $u^h$. Due to the uniform bound on $\nabla'\t R^h$ given by
Lemma~\ref{t6f2},  $R^h$ and $\t R^h$ are precompact in
$L^2(\tilde{S},\M^3)$.
Hence,   $R^h$ and $\t R^h$ strongly converge in
$L^2(\tilde{S},\M^3)$ to $R \in H^1(\tilde{S},\SO 3)$ on a subsequence. Also we can conclude that
$u^h \to u$ strongly in $H^1(\tilde{\Omega},\R^3)$ and
  $\nabla_hu^h\to R=(\nabla'u,\ n)$ strongly in
$L^2(\tilde{\Omega},\M^3)$, for $u \in H^2(\tilde{S},\R^3)$.
Define $\overline{u}^h(x')=\int_{I} u^h(x',x_3) \ud x_3$ and notice that
\begin{equation}
\|\nabla' \overline{u}^h-(\t{R}^h e_1, \t{R}^h e_2) \|^2_{L^2(\tilde{S})} \leq Ch^2,
\end{equation}
for some $C>0$. Define $\t{u}^h \in H^2(\t{S},\R^3)$ such that
$\t{u}^h$ minimizes the problem
 \begin{equation*}
    \min_{v\in H^1(\t S,\R^3)\atop\int_{\t{S}} v=0}\int_{\t S}|\nabla v-(\t{R}^he_1,\t{R}^he_2) |^2\,d x'.
 \end{equation*}
From Remark \ref{rem:regularity} we conclude that there exists $C>0$ such that
\begin{eqnarray*}
& & \|\t{u}^h\|_{H^2(\t{S})} \leq C, \quad \|\nabla' \t{u}^h-(\t{R}^he_1, \t{R}^h e_2)\|^2_{L^2(\t S)} \leq Ch^2, \\ & & \quad \|\nabla' \t{u}^h-\nabla'\overline{u}^h\|^2_{L^2(\t S)} \leq Ch^2.
\end{eqnarray*}
Let us suppose that on a subsequence
\begin{equation}\label{kon0}
\tfrac{1}{h} (R^h)^t(\nabla' \overline{u}^h - \nabla' \t{u}^h) \wtto  \theta(x')+\nabla_y v(x',y),
\end{equation}
where $\theta \in L^2(\t{S},\M^{3\times 2})$ and $v \in L^2(\t{S},\mathring{H}^1(\mathcal{Y},\R^3))$.
This can be done without loss of generality by using Lemma \ref{kompts}, Lemma \ref{mnozenjets} and Poincare inequality.
Following \cite{FJM-02}, we introduce the approximate
 strain
\begin{equation}
  G^h(x)=\frac{(R^h)^t \nabla_h u^h(x)-\id}{h}.
\end{equation}
Since $G^h$ is bounded in $L^2(\tilde{\Omega})$ we can assume that
$G^h \wtto G \in L^2(\tilde{\Omega} \times \mathcal{Y},\M^3)$.
First notice that it suffices to identify the
symmetric part of the two scale limit $G$ of the sequence $G^h$. Indeed, since
$\sqrt{(\id+h F)^t(\id +h F)}=\id + h\sym F$ up to terms of higher
order, the convergence $G^h\wtto G$
implies $E=\sym G$ (see e.g. \cite[Lemma~4.4]{Neukamm-11} for a
proof).
We define $z^h\in H^1(\tilde{\Omega}, \R^3)$ via
\begin{equation}
  u^h(x', x_3) = \overline{u}^h(x') + hx_3 \t{R}^h(x') e_3 + h z^h(x', x_3).
\end{equation}
Then clearly $\int_I z^h(x', x_3) dx_3 = 0$ and we compute
\begin{eqnarray}
& &  \label{combi-0} \\ \nonumber
  G^h&=&
  \frac{ \iota\left( (R^he_1\, ,\, R^h e_2)^t \nabla'\t{u}^h - I' \right)}{h}+\frac{1}{h}\sum_{\alpha=1,2}(R^h e_3 \cdot \partial_\alpha \t{u}^h) e_3 \otimes e_\alpha \\ \nonumber & &+\frac{1}{h} (R^h)^t(\nabla' \overline{u}^h-\nabla'\t{u}^h\, ,\,0) +\frac{1}{h}\left((R^h)^t \t{R}^h e_3 \otimes e_3-(0\, ,0 \, ,e_3)\right)\\ \nonumber & & + x_3(R^h)^t (\nabla'\t{R}^he_3 \,, \, 0) +(R^h)^t \nabla_h z^h,
\end{eqnarray}
where by $I'$ we have denoted the unit matrix in $\M^2$.
By using Lemma \ref{lema:glavna1} we conclude that there exist $\t{B} \in L^2(\t{S},\M^{2}_{\sym})$, $\t{\zeta} \in L^2(\t{S},\mathring{H}^1(\mathcal{Y},\R^2))$ such that on a subsequence
\begin{equation} \label{kon1}
\sym \frac{\left( (R^he_1\;,\; R^h e_2)^t \nabla'\t{u}^h - I' \right)}{h} \wtto \t{B}(x')+\sym \nabla_y \t{\zeta}(x',y).
\end{equation}
Using Lemma \ref{mnozenjets} and Lemma \ref{glavnapomoc} we conclude that there exists
$\varphi \in L^2(\tilde{S},\mathring{H}^2(\mathcal{Y}))$ such that
\begin{equation} \label{kon2}
 x_3(R^h)^t (\nabla'\t{R}^he_3) \wtto x_3 \left(\secf(x')\ , \ 0\right)^t+  x_3 \left(\nabla^2_y \varphi(x',y) \ ,\ 0\right)^t.
\end{equation}
Using Lemma \ref{L:two scale} and Lemma \ref{mnozenjets} and the fact that $\int_I z^h=0$ we conclude that there exists $\phi \in $ $L^2(\tilde{S},$ $\mathring{H}^1(\mathcal{Y},\R^3))$, $d \in L^2(\tilde{S},L^2(I \times \mathcal{Y},\R^3))$ such that on a subsequence
\begin{equation}\label{kon3}
(R^h)^t \nabla_h z^h \wtto  (\nabla_y\phi\,,\,d).
\end{equation}
Without loss of generality we can assume that there exist $p \in L^2(\t{S}  \times \mathcal{Y},\R^2)$ and $z \in L^2(\tilde{S} \times \mathcal{Y};\R^3)$ such that
\begin{eqnarray} \label{kon4}
R^h e_3 \cdot \partial_\alpha \t{u}^h & \wtto& p_\alpha(x',y), \quad \text{for } \alpha=1,2,\\
\label{kon5} \frac{1}{h}\left((R^h)^t \t{R}^h e_3 -e_3\right) &\wtto& z(x',y).
\end{eqnarray}
Using (\ref{kon0}) as well as (\ref{kon1})-(\ref{kon4}) we conclude that there exists
$\zeta \in L^2(\t{\Omega},\mathring{H}^1(\mathcal{Y},\R^2))$, $\varphi \in L^2(\t{S},\mathring{H}^2(\mathcal{Y}))$ and
$g \in L^2(\t{\Omega} \times \mathcal{Y} ,\R^3)$ such that
\begin{eqnarray} \label{kon10}
    E(x, y) &=&
    \iota \left(x_3\secf(x')+B(x') \right)
    +\mathcal{U}\big(\zeta(x,\cdot),\varphi(x,\cdot),g(x,\cdot,\cdot)\big) (x_3,y),\\ \nonumber
&& \forall (x,y) \in \t{\Omega} \times \mathcal{Y},
\end{eqnarray}
where
\begin{eqnarray*}
B &=& \t{B}+\sym \sum_{\alpha,\beta=1,2} \theta_{\alpha \beta} e_\alpha \otimes e_\beta,\\
\zeta &=& \t{\zeta}+\sum_{\alpha=1,2} (\phi_\alpha+v_\alpha)e_\alpha,\\
g &=& \tfrac{1}{2}\sum_{\alpha=1,2} (\partial_{y_\alpha}\phi_3+\partial_{y_\alpha}v_3
+d_\alpha+\theta_{3\alpha}+p_\alpha+z_\alpha) e_\alpha+(d_3+z_3)e_3.\\
\end{eqnarray*}
To obtain the representation (\ref{kon10}) for all $(x,y) \in \Omega \times \mathcal{Y}$ and some
$\zeta \in L^2(\Omega,\mathring{H}^1(\mathcal{Y},\R^2)$, $\varphi \in L^2(S,\mathring{H}^2(\mathcal{Y}))$ and
$g \in L^2(\Omega \times \mathcal{Y} ,\R^3)$
 it is enough to use that  $E \in L^2(\Omega \times \mathcal{Y},\R^{3 \times 3})$ and to exhaust $S$ by an increasing sequence $(\t{S}_n)_{n \in \mathbb{N}} $ of the sets with $C^{1,1}$ boundary and using Remark \ref{kon20}.
\end{proof}

\section{Proof of Theorem~\ref{T:main}}\label{S:proof}

\subsection{Lower bound}

As a preliminary step we need to establish some continuity properties of the quadratic form appearing
in \eqref{ass:expansion} and its relaxed version introduced in Definition~\ref{D:relaxation-formula}.
The proof of the following lemma is direct and we refer  to \cite[Lemma~2.7]{Neukamm-11}.

\begin{lemma}
  \label{lem:1}
  Let $W$ be as in Assumption~\ref{assumption} and let $Q$ be the quadratic form
  associated with $W$ via \eqref{ass:expansion}. Then
  \begin{enumerate}[(i)]
  \item[(Q1)] $Q(\cdot,G)$ is $Y$-periodic and
      measurable for all $G\in\M^3$,
  \item[(Q2)] for  almost every $ (x_3,y)\in \R \times \R^2$ the map
      $ Q( x_3 , y,\cdot)$ is quadratic and satisfies
    \begin{equation}\label{Qelliptisch}
      c_1|\sym G|^2\leq Q(x_3,y,G)=Q(x_3,y,\sym G)\leq c_2|\sym G|^2\ \forall G\in\R^{3\times 3}.
    \end{equation}
  \end{enumerate}
\end{lemma}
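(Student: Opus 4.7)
The plan is to derive both statements directly from Assumption~\ref{assumption}, essentially transferring pointwise properties of $W$ to its second-order Taylor coefficient $Q$ given by (QE).

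For (Q1), I would observe that the pointwise definition of $Q$ via the expansion
\[
Q(x_3,y,G) = \lim_{t\to 0}\frac{W(x_3,y,\id+tG)}{t^2}
\]
(which is valid by (QE) applied to $tG$ for $t\to 0$) makes periodicity and measurability in $y$ immediate consequences of the corresponding properties of $W(x_3,\cdot,F)$. (The quadraticity of $Q(x_3,y,\cdot)$ is already part of the hypothesis (QE) itself, so nothing is needed there.)

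For (Q2), the main work is the identity $Q(x_3,y,G)=Q(x_3,y,\sym G)$ together with the two-sided bound. For the symmetry I would exploit \eqref{ass:frame-indifference} via polar decomposition: for $|G|$ small, $\id+G=R\sqrt{(\id+G)^t(\id+G)}$ with $R\in\SO 3$ close to $\id$, and
\[
\sqrt{(\id+G)^t(\id+G)} = \id+\sym G + O(|G|^2).
\]
Frame indifference gives $W(x_3,y,\id+G)=W(x_3,y,\id+\sym G+O(|G|^2))$; rescaling $G\mapsto tG$, dividing by $t^2$, and passing to the limit using \eqref{eq:94} yields $Q(x_3,y,G)=Q(x_3,y,\sym G)$. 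The lower bound $c_1|\sym G|^2\le Q(x_3,y,G)$ follows from the non-degeneracy estimate $W(x_3,y,\id+G)\ge c_1\dist^2(\id+G,\SO 3)$ together with the elementary fact that $\dist(\id+G,\SO 3)=|\sym G|+O(|G|^2)$ for small $G$; applying this to $tG$, dividing by $t^2$, and letting $t\to 0$ gives the estimate. The upper bound $Q(x_3,y,G)\le c_2|\sym G|^2$ is obtained analogously from the second inequality of \eqref{ass:non-degenerate}, which applies for $\dist(\id+tG,\SO 3)$ sufficiently small, i.e.\ for $t$ small depending on $G$, after which rescaling extends the bound to all $G$ by the quadratic homogeneity of $Q$.

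No step here is really an obstacle, since everything reduces to a careful Taylor expansion combined with the three structural hypotheses (FI), (ND), (QE); the only subtle point is keeping track of the $O(|G|^2)$ remainders when using frame indifference and the distance formula, which is why we need to appeal to \eqref{eq:94} (the uniform remainder estimate) rather than just the pointwise limit in (QE). This is exactly the argument carried out in \cite[Lemma~2.7]{Neukamm-11}, to which the paper defers the details.
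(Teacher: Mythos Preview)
Your proposal is correct and matches the paper's treatment: the paper gives no proof of its own but states that the argument is direct and refers to \cite[Lemma~2.7]{Neukamm-11}, and you have accurately reconstructed that standard argument (limit representation for (Q1), polar decomposition plus frame indifference for the symmetry, and the two inequalities in \eqref{ass:non-degenerate} together with $\dist(\id+tG,\SO 3)=t|\sym G|+O(t^2)$ for the bounds). Nothing further is needed.
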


The following lemma characterizes the limit energy density.
\begin{lemma}\label{L:Qgamma}
    For all $ A \in \M^{2}_{\sym}$  there exist a unique quadraple $(B,\zeta,\varphi,g)$ with  $B \in  \M^2_{\sym}$
    and $\zeta \in \mathring H^1(\mathcal Y,\R^2)$, $\varphi \in \mathring{H}^2(\mathcal{Y})$, $g \in L^2(I \times \mathcal{Y},\R^3)$ such that:
  \begin{equation*}
    Q_{2}^{rel}(A)=\iint_{I\times
      Y}Q\left( x_3,y,\;\iota(x_3 A+ B)+\mathcal{U}(\zeta,\varphi,g) \,\right) dydx_3.
  \end{equation*}
  The induced mapping $\M^2_{\sym}\ni A\mapsto (B,\zeta,\varphi,g)\in
  \M^2_{\sym}\times\mathring H^1(\mathcal Y,\R^2)\times \mathring{H}^2(\mathcal{Y}) \times L^2(I\times \mathcal{Y}, \R^3)$ is
  bounded and linear and thus $ A\mapsto Q_{2}^{\text{rel}}(A)$ is quadratic.
 \end{lemma}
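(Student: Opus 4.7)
The plan is to interpret $Q_2^{\rm rel}(A)$ as the minimum value of a coercive, strictly convex quadratic functional on a Hilbert space, and then deduce existence, uniqueness, linearity, and boundedness of the minimizer by routine variational arguments. The only delicate point is verifying coercivity via an orthogonality identity.

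Introduce the Hilbert space
\[
\mathcal{H} := \M^2_{\sym} \times \mathring{H}^1(\mathcal{Y}, \R^2) \times \mathring{H}^2(\mathcal{Y}) \times L^2(I \times \mathcal{Y}, \R^3),
\]
equipped with the obvious product norm, and the bounded affine map $T_A : \mathcal{H} \to L^2(I \times Y, \M^3_{\sym})$ defined by $T_A(B,\zeta,\varphi,g) := \iota(x_3 A + B) + \mathcal{U}(\zeta,\varphi,g)$. The object to minimize is
\[
\mathcal{F}_A(B,\zeta,\varphi,g) := \iint_{I\times Y} Q(x_3, y, T_A(B,\zeta,\varphi,g))\,dy\,dx_3,
\]
which by Lemma \ref{lem:1} satisfies $c_1 \|T_A(\cdot)\|_{L^2}^2 \leq \mathcal{F}_A(\cdot) \leq c_2 \|T_A(\cdot)\|_{L^2}^2$ (the latter since $T_A$ takes values in $\M^3_{\sym}$) and is a continuous convex quadratic functional on $\mathcal{H}$ with linear shift depending on $A$.

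The key observation is the following orthogonality: since $\int_I x_3\,dx_3 = 0$ and $Y$-periodicity forces $\int_Y \sym\nabla_y \zeta = 0$ and $\int_Y \nabla_y^2 \varphi = 0$, a direct expansion of $\|T_A\|_{L^2(I\times Y)}^2$ shows that the summands $x_3 \iota(A)$, $\iota(B)$, $\sym \nabla_y \zeta + x_3 \nabla_y^2 \varphi$ (the upper-block oscillation), and the $g$-entries are pairwise $L^2$-orthogonal. Hence
\[
\|T_A(B,\zeta,\varphi,g)\|_{L^2}^2 = \tfrac{1}{12}|A|^2 + |B|^2 + \|\mathcal{U}(\zeta,\varphi,g)\|_{L^2}^2,
\]
and combining with Remark \ref{kon20} yields the coercivity estimate
\[
\mathcal{F}_A(B,\zeta,\varphi,g) \gtrsim |A|^2 + |B|^2 + \|\zeta\|_{H^1}^2 + \|\varphi\|_{H^2}^2 + \|g\|_{L^2}^2.
\]
The direct method then gives existence: any minimizing sequence is bounded in $\mathcal{H}$ (since $Q_2^{\rm rel}(A) \leq \mathcal{F}_A(0,0,0,0) \leq \tfrac{c_2}{12}|A|^2 < \infty$), so along a subsequence it converges weakly, and the convexity and continuity of $\mathcal{F}_A$ provide weak lower semicontinuity. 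Uniqueness follows from strict convexity of the quadratic part: applying the coercivity estimate to the difference of two minimizers (with $A=0$) forces the difference to vanish in $\mathcal{H}$.

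Let $\Lambda(A) = (B(A),\zeta(A),\varphi(A),g(A))$ denote the unique minimizer. The bound $\|\Lambda(A)\|_{\mathcal{H}} \lesssim |A|$ follows by combining the coercivity estimate with $\mathcal{F}_A(\Lambda(A)) = Q_2^{\rm rel}(A) \leq \tfrac{c_2}{12}|A|^2$. Linearity of $\Lambda$ follows from uniqueness: for any $\alpha,\beta\in\R$ and $A_1,A_2\in\M^2_{\sym}$, the quadruple $\alpha\Lambda(A_1) + \beta\Lambda(A_2)$ satisfies the Euler--Lagrange equations associated with $\mathcal{F}_{\alpha A_1 + \beta A_2}$ (which are linear in the unknowns and in $A$), hence coincides with $\Lambda(\alpha A_1 + \beta A_2)$. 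Finally, since $T_A(\Lambda(A))$ is linear in $A$ and $Q(x_3,y,\cdot)$ is quadratic, $Q_2^{\rm rel}(A) = \iint_{I\times Y} Q(x_3,y,T_A(\Lambda(A)))\,dy\,dx_3$ is a quadratic form in $A$. The main step requiring care is the orthogonality identity above; everything else is a routine application of Hilbert-space variational calculus.
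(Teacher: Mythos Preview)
Your proof is correct and follows essentially the same approach as the paper: both reduce the problem to minimizing a coercive quadratic functional on a Hilbert space, with coercivity coming from \eqref{Qelliptisch} together with Remark~\ref{kon20}. The paper is more terse---it works on the image subspace $X=\iota(\M^2_{\sym})+L_0(I\times\mathcal Y,\M^3_{\sym})\subset L^2$ and invokes the Riesz representation theorem directly---whereas you work on the parameter space, spell out the orthogonality identity explicitly, and run the direct method; these are equivalent routes to the same conclusion.
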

\begin{proof} 
By \eqref{Qelliptisch} and by Remark \ref{kon20} for each $A\in\M^{2}_{\sym}$
the bilinear form associated with the quadratic functional 
$$
G \mapsto \int_{Y\times I} Q(x_3,y, x_3 A + G )\ dy dx_3.
$$
is elliptic  on the closed linear subspace of $L^2(I \times \mathcal{Y},\M^{3}_{\sym})$ given by
$$
X := \iota(\M^2_{\sym})+L_0(I\times \mathcal{Y},\M^3_{\sym}).
$$
Hence it admits a unique minimizer $ G_0 \in X$ by Riesz representation theorem. Linearity of $G_0$ in $A$ follows immediately from that.
\end{proof}

\begin{remark} \label{rem:jednostavno}
It can be easily seen that
\begin{eqnarray*}
  Q_{2}^{\text{rel}}(A)&:=&
  \inf_{B,\zeta,\varphi}\iint_{I\times Y}Q_2\left( x_3,
    y,\;x_3A+B+\sym\nabla_y\zeta+x_3\nabla_y^2\varphi\right)\,dy\,dx_3,
\end{eqnarray*}
where the infimum is taken over all $B\in\M^{2}_{\sym}$,
$\zeta\in \mathring{H}^1(\mathcal Y,\R^2)$ and $\varphi\in \mathring{H}^2(\mathcal Y)$.
 In the case when $W$ and consequently $Q$ are independent of $x_3$, but still dependent on $y$, the relaxation formula looks significantly simpler i.e. the minimizing $B$ and $\zeta$ in the above expression are easily shown to be $0$. Thus the homogenization of the layered materials is more complex than the materials that have energy density independent of $x_3$.
\end{remark}
\begin{remark} \label{isotropicformula}
Here we discuss the cell formula for the case of isotropic energy and layered material, homogeneous in $x_3$ direction.
Then for $A \in M^3_{\sym}$ we have
$$Q(y,A)=\mu(y)|A|^2+\frac{\lambda(y)}{2}(\tr(A))^2,$$
for some functions $\mu,\lambda:\mathcal{Y} \to \R$ (see e.g. \cite{CiarletI}), which have to satisfy some additional properties to keep the form $Q(y,\cdot)$ positive definite.
The quadratic functional $Q_2$ can then be easily calculated and is, for $A \in M^2_{\sym}$, given by
\begin{equation}
Q_2(y,A)=\tilde{\mu}(y)|A|^2+\frac{\tilde{\lambda}(y)}{2}(\tr(A))^2,
\end{equation}
where
$$ \tilde{\mu}(y)=\mu(y), \quad \tilde{\lambda}(y)=\frac{\mu(y) \lambda(y)}{ 2\mu(y)+\lambda(y)}.$$
Since $Q_2$ does not depend on $x_3$ we have, according to the previous remark,
$$Q_{2}^{\text{rel}}(A)=
  \frac{1}{12}\inf_{\varphi \in \mathring{H}^2(\mathcal{Y})}\int_{Y}Q_2\left(y,\;A+\nabla_y^2\varphi\right)\,dy. $$
\end{remark}
In the case when we have layered material, i.e., only oscillations in one direction minimization formula can be easily computed
\begin{eqnarray*}
Q_{2}^{\text{rel}}(A)&=&
  \frac{1}{12}\inf_{\varphi \in \mathring{L}^2([0,1])}\int_{[0,1]}\Big(\tilde{\mu}(y)\left((A_{11}+\varphi)^2+2A_{12}^2+A_{22}^2\right)\\
  & &+ \frac{\tilde{\lambda}(y)}{2}(A_{11}+A_{22}+\varphi)^2\Big)\,dy.
\end{eqnarray*}
By a straightforward calculation it can be easily seen that
$$Q_{2}^{\text{rel}}(A)=\frac{1}{12} \left(c_1 A_{11}^2+c_2 A_{22}^2+2\langle \tilde{\mu}\rangle A_{12}^2+c_3 A_{11}A_{22}\right), $$
where
$$
c_1=\left\langle\tilde{\mu}+\frac{\tilde{\lambda}}{2}\right\rangle_H, \quad c_2=\left\langle\frac{\tilde{\lambda}}{2}\right\rangle_H +\langle \tilde{\mu} \rangle,
$$
$$
c_3=\left\langle\tilde{\mu}+\frac{\tilde{\lambda}}{2}\right\rangle_H+\left\langle\frac{\tilde{\lambda}}{2}\right\rangle_H+
\left\langle \frac{\tilde{\mu}^2}{\tilde{\mu}+\tfrac{\tilde{\lambda}}{2}}\right\rangle-\left\langle \frac{\tilde{\mu}}{\tilde{\mu}+\tfrac{\tilde{\lambda}}{2}}\right\rangle^2\left\langle\tilde{\mu}+\frac{\tilde{\lambda}}{2}\right\rangle_H-\langle \mu \rangle.
$$
By $\langle \cdot \rangle$ we have denoted the mean value of the function over the interval $[0,1]$, while by
$\langle \cdot \rangle_H$ we have denoted the harmonic mean of the function over the interval $[0,1]$.

In \cite{Horneuvel12} we were able to obtain explicit formula for layered material in the case $\gamma \in (0,\infty]$ only for very specific materials where $\lambda=0$.
\begin{remark}\label{rem:poveznica}
 Under the assumption that
$\tfrac{h}{\eh} \to  \gamma \in (0,\infty)$ the quadratic functional associated with the $\Gamma$-limit is given by
 $Q_{2,\gamma}:\M^{2}_{\sym}\to[0,\infty)$:
    \begin{eqnarray*}
      Q_{2,\gamma}(A)&:=&
      \inf_{B,\phi}\iint_{I\times Y}Q\left( x_3,
        y,\;\ \iota(x_3A+B)+(\nabla_y
        \phi\,,\,\tfrac{1}{\gamma}\partial_3\phi)\,\right)\,dy\,dx_3,
    \end{eqnarray*}
    where the infimum is taken over all $B\in\R^{2\times 2}_{\sym}$
    and $\phi\in \mathring{H}^1(I\times\mathcal Y,\R^3)$.
Using \cite[Lemma 5.2]{NeuVel-12} one can easily obtain that for
$A \in \M^{2}_{\sym}$ we have
$$ Q_2^{\text{rel}}(A)=\lim_{\gamma \to 0} Q_{2,\gamma}(A).$$
The continuity in $\gamma$, for all $\gamma \in [0,\infty]$, of the quadratic functional associated with the $\Gamma$-limit was already observed in the case of von-K\'arm\'an plate (see \cite{NeuVel-12}). The case of von-K\'{a}rm\'{a}n shell resembles the case of bending plate since we obtain that the continuity holds under the assumption that $\eh^2 \ll h \ll \eh$ as already commented in the introduction.
\end{remark}

\begin{proof}[Proof of Theorem~\ref{T:main} (lower bound)]
  Without loss of generality we may assume that
  $\fint_{\Omega}u^h\,dx=0$ and $\limsup_{h\to
    0}h^{-2}\mathcal E^{h,\eh}(u^h)<\infty$. In view of
  \eqref{ass:non-degenerate}, the sequence $u^h$ has finite
  bending energy and the sequence $E^h$ is bounded in
  $L^2(\Omega,\M^3)$ by using the inequality $|\sqrt{F^TF}-I|^2 \lesssim \dist^2(F,\SO 3)$, valid for an arbitrary $F \in \M^3$. Hence, from Theorem~\ref{T:FJM-compactness}
  we deduce that $u\in\WW(S,\R^3)$. By Lemma~\ref{kompts} (i) and Proposition~\ref{P:strain}
  (i) we can pass to a subsequence such that, for some $E\in L^2(\Omega\times Y; \R^{3\times 3})$,
  \begin{equation*}
    E^h\wtto E,
  \end{equation*}
  where $E$ can be written in the form of \eqref{P:strain:1a}. As explained in \cite{Neukamm-11}
(cf. \cite{FJM-02} for the corresponding argument in the homogeneous case), Taylor expansion of $W(\frac{x'}{\eh},\id+h E^h(x))$
combined with the lower semi-continuity of convex integral functionals
with respect to  weak two scale convergence (see e.g. \cite[Proposition~1.3]{Visintin-07}) yields the lower bound
\begin{align*}
&    \liminf\limits_{h\to 0} \frac{1}{h^2}\mathcal E^{h,\eh}(u^h) \geq
    \iint_{\Omega\times Y}Q( x_3,y,E(x,y))\,dy\,dx=
\\
& \iint_{\Omega\times Y}Q\Bigg( x_3,y,\iota(x_3\secf(x') + B(x'))+\mathcal{U}\big(\zeta(x,\cdot),\varphi(x,\cdot),g(x,\cdot,\cdot)\big) (x_3,y)\,\Bigg)\,dy\,dx,
\end{align*}
where we have used \eqref{P:strain:1a}.
Minimization over $B\in L^2(S,\M^2)$ and $\zeta \in L^2(S,\mathring{H}^1(\mathcal{Y},\R^2))$, $\varphi \in L^2(S,\mathring{H}^2(\mathcal{Y}))$, $g \in L^2(\Omega\times \mathcal{Y},\R^3)$ yields
  \begin{equation*}
    \liminf\limits_{h\to 0}\frac{1}{h^2}\mathcal E^{h,\eh}(u^h)\geq
    \int_{S}Q_{2}^{\text{rel}}(\secf(x'))\,dx'=\mathcal E(u).
  \end{equation*}
\end{proof}


\subsection{Upper bound}\label{Upper Bound}

It remains to prove the upper bound.
We modify the argumentation given in \cite{Horneuvel12} by adding additional oscillations.
To recover the matrix $B$ in the relaxation formula \ref{D:relaxation-formula} we use the same ansatz as in \cite{Horneuvel12}. The important observation, already present in \cite{Horneuvel12}, that we can not recover arbitrary matrix field $B$, but just the one necessary for relaxation. Namely on the flat parts of the isometry we don't need arbitrary matrix field to recover, but just zero field, since on the flat parts zero field solves the cell formula.
Since for $\Gamma$-limit it is enough to do the construction for dense subsets, first we will say which dense subset of $\WW(S,\R^3)$ is of interest.
The density of smooth isometries in $H^2$ isometric immersions is
established in \cite{Hornung-arma2} (cf. also \cite{Pakzad} for an earlier result in this direction).
The results in \cite{Hornung-arma2}  forces us to consider domains $S$
which are not only Lipschitz but also piecewise $C^1$. More precisely, we only need that the outer unit normal
be continuous away from a subset of $\partial S$ with length zero.

For a given $u\in \WW(S,\R^3)$ and for
a displacement $V\in H^2(S,\R^3)$ we denote by $q_V$ the quadratic form
$$
q_V = \sym \left((\nabla' u)^T(\nabla' V)\right).
$$
We denote by $\mathcal A(S)$ the set of all
$u\in \WW(S, \R^3)\cap C^{\infty}(\overline{S}, \R^3)$
with the property that
\begin{align*}
\Big\{ B\in C^{\infty}(\overline{S}, \M^2_{\sym}) &: B = 0 \mbox{ in a neighborhood of }\{x'\in S : \Pi(x') = 0 \}\Big\}
\\
&\subset \{q_V : V\in C^{\infty}(\overline{S}; \R^3)\}.
\end{align*}
In other words, if $u\in\mathcal A(S)$ and $B\in C^{\infty}(\overline{S}, \M^2_{\sym})$ is a matrix field
which vanishes in a neighborhood of $\{\Pi = 0\}$, then there exists a displacement $V\in C^{\infty}(\overline{S}; \R^3)$ such
that $q_V = B$.
The proof of the following lemma is given in \cite[Lemma 3.3]{Schmidt-07} while in \cite{Horneuvel12} it is written in this way, after appropriate substitution.

\begin{lemma}
\label{le71}
The set $\mathcal A(S)$ is dense in $\WW(S,\R^3)$ with respect to the strong $H^2$ topology.
\end{lemma}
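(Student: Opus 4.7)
The plan is to combine the known density of smooth isometries in $\WW(S,\R^3)$ with a pointwise verification that every smooth isometry belongs to $\mathcal{A}(S)$. In other words, the density statement reduces entirely to showing that the second defining property of $\mathcal{A}(S)$ -- solvability of $q_V = B$ for compactly (away from $\{\Pi = 0\}$) supported $B$ -- is automatic for any $u \in \WW(S,\R^3)\cap C^\infty(\bar S,\R^3)$.

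First I would invoke the approximation result of Hornung (cf.\ also Pakzad): because $S$ is Lipschitz with piecewise $C^1$ boundary, every $u \in \WW(S,\R^3)$ can be approximated in the strong $H^2$ topology by a sequence $u_n \in \WW(S,\R^3)\cap C^\infty(\bar S,\R^3)$. Once this is granted, it suffices to prove $u_n \in \mathcal A(S)$ for each $n$. Hence the remainder of the proof fixes a smooth isometric immersion $u$ and a smooth matrix field $B \in C^\infty(\bar S,\M^2_{\sym})$ which vanishes in an open neighborhood $U$ of $\{\Pi = 0\}$, and constructs a displacement $V \in C^\infty(\bar S,\R^3)$ with $\sym\bigl((\nabla'u)^T \nabla' V\bigr) = B$.

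The key structural ingredient is developability. Since $u$ is a smooth $H^2$ isometric immersion of a planar domain, the induced metric is flat and therefore the Gauss curvature of $u(S)$ vanishes identically; consequently, on the open set $\{\Pi \neq 0\}$ the immersion $u$ is ruled by straight-line segments (the asymptotic directions of $\Pi$). On each connected component of $\{\Pi \neq 0\}$ one can therefore introduce smooth local coordinates $(s,t)$ adapted to this foliation, with $s$ running along the rulings and $t$ transverse. Writing the unknown displacement in the moving frame $\{\partial_1 u, \partial_2 u, n\}$ as $V = a\,\partial_1 u + b\,\partial_2 u + c\,n$, a direct computation shows that the equation $q_V = B$ becomes a linear first-order system in $(a,b,c)$ whose principal part is transport along the rulings, with smooth coefficients depending only on $u$ and its derivatives and with inhomogeneity depending linearly on $B$ and its first derivatives.

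Since $B \equiv 0$ on $U \supset \{\Pi = 0\}$, each ruling enters the support of $B$ from a region where one may prescribe zero initial data, and the ODE produced by integrating along the ruling yields a unique smooth solution. Extending $V$ by zero across $U$ produces a smooth displacement on all of $\bar S$, giving $u \in \mathcal A(S)$. The hard part is exactly this last paragraph: one must verify that the foliation by rulings is smooth up to the boundary of each connected component of $\{\Pi\neq 0\}$, that the ODE along each ruling has coefficients uniform enough to ensure the resulting $(a,b,c)$ depend smoothly on the transverse parameter, and that the construction on different components matches smoothly with the zero extension on $U$. These regularity issues -- essentially the developable-surface analysis carried out in \cite[Lemma~3.3]{Schmidt-07} -- are the core of the argument; the density statement then follows immediately by combining them with the Hornung approximation.
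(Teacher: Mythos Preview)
Your overall strategy --- combine Hornung's $H^2$-density of smooth isometric immersions with Schmidt's solvability result for $q_V=B$ --- is exactly the route the paper takes. However, you make a stronger claim than the paper does, and than is warranted: you assert that \emph{every} $u\in\WW(S,\R^3)\cap C^\infty(\overline S,\R^3)$ lies in $\mathcal A(S)$. The paper only claims (and only needs) that a $W^{2,2}$-dense subclass of smooth isometries belongs to $\mathcal A(S)$. The distinction matters: Schmidt's Lemma~3.3 does not apply to an arbitrary smooth isometry, and Hornung's approximation theorems are used precisely to produce smooth approximants with a sufficiently tame developable structure (finitely many curved regions with well-behaved ruling foliations up to the boundary) for which Schmidt's local solvability can be globalized.

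Your transport-along-rulings sketch has concrete gaps that reflect this. First, a ruling in a connected component of $\{\Pi\neq 0\}$ may have both endpoints on $\partial S$ and never meet $U$, so there is no natural place to impose zero initial data. Second, even when a ruling does meet $U$, nothing forces the solution $V$ to vanish on \emph{all} of $U$ intersected with that component (a ruling can leave and re-enter $U$, carrying nonzero data back in), so the ``extend by zero across $U$'' step need not produce a smooth, or even continuous, global $V$. Third, for a general smooth isometry the set $\{\Pi=0\}$ and the transverse regularity of the ruling foliation can be badly behaved; the regularity verification you defer to Schmidt is exactly what requires the extra structural hypotheses furnished by Hornung's dense class. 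The fix is simple: replace ``it suffices to prove $u_n\in\mathcal A(S)$ for each $n$'' by ``it suffices to choose the approximating $u_n$ in the special dense class provided by Hornung, for which Schmidt's argument applies,'' and then cite \cite[Lemma~3.3]{Schmidt-07} for the representation $B=\sym\nabla' g+\alpha\Pi$, setting $V=g_1\partial_1 u+g_2\partial_2 u+\alpha n$.
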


Thanks to Lemma \ref{le71} it will be enough to construct recovery sequences for limiting
deformations belonging to $\mathcal{A}(S)$.

First  we present a construction assuming
additional information about the limit. Then we use the standard diagonalizing argument for the general case. The meaning of Lemma \ref{le71} is that we can recover the arbitrary matrix field out of the flat parts of the isometry.
\begin{remark}
For the homogenization of the bending shell one would need the stronger compactness result that would tell us what is exactly the (macroscopic) matrix field $B$ that can be used in the relaxation formula. This fact, together with lacking of results on density of smooth isometries in $H^2$ isometries for the general manifolds, are the main obstructions for deriving the homogenized bending shell model.
\end{remark}

\begin{lemma}
  \label{le73}
  Let $u\in\WW(S,\R^3)\cap W^{2,\infty}(S,\R^3)$ and let $V\in W^{2,\infty}(S, \R^3)$.
  Let $\zeta \in C^\infty_c(S,\mathring{C}^\infty(\mathcal{Y},\R^2))$, $\varphi \in C_c^\infty(S,\mathring{C}^\infty(\mathcal{Y}))$, $\bar{g} \in C_c^\infty(S,\mathring{C}^\infty(I \times \mathcal{Y},\R^3))$. Then there exists a sequence
    $(u^h)\subset H^1(\Omega, \R^3)$ such that
$u^h\to u$ and $\nabla_h u^h\to (\nabla'u,\,n)$ uniformly in $\Omega$ and
    \begin{multline}
      \label{le73-1a}
      \lim_{h \to 0} \frac{1}{h^2}\mathcal E^{h,\eh}(u^h)=\\
      \iint_{\Omega\times Y}Q\left(
         x_3,y,\;\iota(x_3\secf(x') + q_V(x'))+\mathcal{U}\big(\zeta(x,\cdot),\varphi(x,\cdot),g(x,\cdot,\cdot)\big) (x_3,y)\,\right)\,d  y \,d x_3 \,d x'.
    \end{multline}
\end{lemma}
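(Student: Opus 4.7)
My strategy is to realise the heuristic $\nabla_h u^h = R + \eh R A^h + O(h)$ spelled out in the introduction, with $R(x') := (\partial_1 u, \partial_2 u, n)$ and a smooth $\so 3$-valued $A^h$ tuned to $\varphi$. The skew-symmetry of $A^h$ kills the otherwise divergent $\eh$-cross-term in $(\nabla_h u^h)^t\nabla_h u^h$, and the hypothesis $\eh^2 \ll h$ makes the remaining quadratic term $\eh^2 (A^h)^t A^h$ of order $o(h)$, so that $E^h = \sym G^h + o(1)$ in $L^2$ with $\sym G^h$ computable. Concretely, writing $f^h := f(x', x'/\eh)$, I would take
\begin{align*}
u^h(x) &= u(x') + h x_3 n(x') + h V(x') \\
&\quad - \eh^2 \varphi^h R(x') e_3 + \eh h x_3 R(x')\begin{pmatrix} \partial_{y_1}\varphi^h \\ \partial_{y_2}\varphi^h \\ 0 \end{pmatrix} \\
&\quad + h\eh R(x')\begin{pmatrix} \zeta_1^h \\ \zeta_2^h \\ 0 \end{pmatrix} + h R(x')\Phi^h(x),
\end{align*}
where the smooth corrector $\Phi^h$ contains the primitive of $\bar g^h$ in the $x_3$-variable together with additional macroscopic Kirchhoff-type through-thickness terms, chosen so that the third row and column of $\sym G^h$ produce $(\bar g_1, \bar g_2, \bar g_3)^t$ after symmetrisation and that the $V$-induced mismatch there is absorbed. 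Every corrector vanishes as $h \to 0$, giving $u^h \to u$ uniformly.

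\textbf{Gradient and strain.} A direct term-by-term computation, using $\partial_{x_\alpha} f(x', x'/\eh) = \partial_{x_\alpha} f + \eh^{-1} \partial_{y_\alpha} f$, shows that the only order-$\eh$ terms in $\nabla_h u^h$ come from the two $\varphi$-corrections: the normal one contributes $-\eh \partial_{y_\alpha}\varphi\,R e_3$ to the $\alpha$-th in-plane column, and the tangential compensator contributes $+\eh R(\partial_{y_1}\varphi, \partial_{y_2}\varphi, 0)^t$ to the third column via $\partial_3 /h$. Together they form $\eh R A^h$ with the skew $A^h$ given by $A^h_{\alpha 3} = -A^h_{3\alpha} = \partial_{y_\alpha}\varphi$. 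At order $h$, after left-multiplying by $R^t$ and symmetrising, the in-plane block of $\sym G^h$ receives $x_3\secf$ from $h x_3 \partial_\alpha n$, $q_V$ from $h\partial_\alpha V$, $\sym\nabla_y\zeta$ from the leading $y$-derivative of $h\eh R(\zeta_1,\zeta_2,0)^t$, and—here is the second role of the compensator—$x_3\nabla_y^2\varphi$ from the secondary $y$-derivative of $\eh h x_3 R(\partial_y\varphi, 0)^t$. The third row and column are furnished by $(1/h)\partial_3$ of $hR\Phi^h$ and by the $V$-contribution, $\Phi^h$ being chosen so that the outcome after symmetrisation is exactly $(\bar g_1, \bar g_2, \bar g_3)^t$.

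\textbf{Limit and main obstacle.} Writing $\nabla_h u^h = R(I + \eh A^h + h G^h) + \rho^h$ with $A^h \in \so 3$ and $\rho^h = o(h)$ in $L^\infty$, skew-symmetry together with $\eh^2/h \to 0$ give
\begin{equation*}
(\nabla_h u^h)^t\nabla_h u^h = I + 2h\sym G^h + \eh^2 (A^h)^t A^h + o(h) = I + 2h\sym G^h + o(h),
\end{equation*}
hence $E^h = \sym G^h + o(1)$ in $L^2$ and $E^h$ strong-two-scale converges to $E^\ast(x,y) = \iota(x_3\secf(x') + q_V(x')) + \mathcal U(\zeta(x',\cdot), \varphi(x',\cdot), g(x',\cdot,\cdot))(x_3, y)$ with $g = \bar g$. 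By (QE) and the $L^\infty$-bound on $E^h$, $h^{-2}W(x_3, x'/\eh, I + h E^h) = Q(x_3, x'/\eh, E^h) + o(1)$ uniformly in $\Omega$, and a standard Riemann-sum (or unfolding) argument for the continuous $Y$-periodic integrand yields $\int_\Omega Q(x_3, x'/\eh, E^h)\,dx \to \iint_{\Omega\times Y} Q(x_3, y, E^\ast(x,y))\,dy\,dx$, which is \eqref{le73-1a}. The main subtlety is the double role of the compensator $\eh h x_3 R(\partial_y\varphi, 0)^t$: without it the $\eh$-contribution to $(\nabla_h u^h)^t\nabla_h u^h$ fails to cancel and $E^h$ blows up like $\eh/h \to \infty$, yet this exact term also supplies, via its slow-variable derivative, the bending-type $x_3 \nabla_y^2 \varphi$ contribution in the limit strain. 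Bookkeeping of the $O(h\eh)$, $O(\eh^2)$, $O(h^2)$, and $O(h^2/\eh)$ tails in $\rho^h$ is routine given the smoothness and compact support of all correctors.
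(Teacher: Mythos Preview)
Your approach is essentially the paper's: the paper uses the same ansatz with $\Phi^h$ made explicit as $h x_3 R^t\mu + h\int_{-1/2}^{x_3}\bar g\,dt$, where $\mu = (I - n\otimes n)(\partial_1 V\wedge\partial_2 u + \partial_1 u\wedge\partial_2 V)$ is exactly the Kirchhoff-type corrector you allude to, then computes $(\nabla_h u^h)^t\nabla_h u^h$ via the same skew cancellation and passes to the limit by strong two-scale convergence of $E^h$. The only cosmetic differences are that the paper also carries the subdominant term $h\eh^2 x_3 R(\partial_{x'}\varphi,0)^t$ (making even the $\eh^2$-part skew, though this is unnecessary since $\eh^2=o(h)$) and ends with $g=(\tfrac12\bar g_1,\tfrac12\bar g_2,\bar g_3)$ rather than $g=\bar g$.
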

\begin{proof}
First we start with the following Kirchhoff-Love type ansatz to which we add its linearization induced by the displacement $V$:
  \begin{eqnarray*}
    v^h(x)&:=&u(x')+hx_3n(x') + h\left(V(x') + h x_3\mu(x')\right),
  \end{eqnarray*}
where $\mu$ is given by
$$
\mu = (I - n\otimes n)(\d_1 V\wedge\d_2 u + \d_1 u\wedge\d_2 V).
$$
We set $R(x') = (\nabla'u(x'),\ n(x'))$. A straighforward computation
shows that
\begin{equation}
\label{vhers}
\nabla_hv^h = R + h\Big( (\nabla' V, \mu) + x_3(\nabla' n, 0) \Big) + h^2x_3(\nabla'\mu, 0).
\end{equation}
The actual recovery sequence $u^h$ is obtained by adding to $v^h$ the oscillating corrections
given by $\zeta$, $\varphi$, $\bar{g}$:
  \begin{eqnarray}&& \label{eq:15} \\
  \nonumber  u^h(x) &:=& v^h(x) -\eh^2 n(x') \varphi(x', \frac{x'}{\eh})\\  \nonumber & &+h\eh^2 x_3 R(x') \left(\begin{array}{c} \partial_{x_1} \varphi(x, \frac{x'}{\eh})+\tfrac{1}{\eh} \partial_{y_1} \varphi(x, \frac{x'}{\eh}) \\ \partial_{x_2} \varphi(x, \frac{x'}{\eh})+\tfrac{1}{\eh} \partial_{y_2} \varphi(x, \frac{x'}{\eh}) \\ 0 \end{array}\right)\\ \nonumber && +h\eh R(x')\left(\begin{array}{c} \zeta(x',\frac{x'}{\eh}) \\ 0 \end{array} \right)+h^2\int_{-1/2}^{x_3} R(x') \bar{g}(x',t,\frac{x'}{\eh}) \, d t.
  \end{eqnarray}
By the regularity of $V$, the uniform convergence of $u^h$ and $\nabla_hu^h$ is immediate.
Equation \eqref{vhers} implies
\begin{equation}
\label{fin-1}
\begin{split}
R^t\nabla_h u^h = \id & +\eh \left(\begin{array}{ccc} 0 & 0 &\partial_{y_1} \varphi \\  0 & 0 &\partial_{y_2} \varphi \\ -\partial_{y_1} \varphi & -\partial_{y_2} \varphi & 0\end{array} \right)
\\ &+ h\iota\left((\nabla' u)^t(\nabla' V) + x_3\secf\right) \\
&
+ h \left( (\mu\cdot\nabla' u)\otimes e_3 + e_3\otimes (n\cdot\nabla' V) \right)
\\
& + h \iota\left( x_3 \nabla_y^2 \varphi  +  \nabla_y \zeta \right) +h\left(\begin{array}{ccc} 0 & 0 & \bar{g}_1 \\ 0 &0 & \bar{g}_2 \\ 0 & 0 & \bar{g}_3  \end{array} \right)
\\
&-\eh^2\iota(\varphi\secf)  +\eh^2 \left(\begin{array}{ccc} 0 & 0 &\partial_{x_1} \varphi \\  0 & 0 &\partial_{x_2} \varphi \\ -\partial_{x_1} \varphi & -\partial_{x_2} \varphi & 0\end{array} \right) \\
&+h\eh^2 x_3   \iota \left(\begin{array}{cc} \partial_{x_1x_1} \varphi+\tfrac{1}{\eh}\partial_{x_1y_1} \varphi & \partial_{x_1x_2} \varphi+\tfrac{1}{\eh}\partial_{x_2y_1} \\ \partial_{x_1x_2} \varphi+\tfrac{1}{\eh}\partial_{x_1y_2} \varphi & \partial_{x_2x_2} \varphi+\tfrac{1}{\eh}\partial_{x_2y_2}   \end{array}\right)
\end{split}
\end{equation}

\begin{equation*}
\begin{split}
&+ h^2x_3R^t(\nabla'\mu, 0) + h\eh^2 x_3 (R^t\nabla' R) \left(\begin{array}{c} \partial_{x_1} \varphi+\tfrac{1}{\eh}\partial_{y_1} \varphi \\ \partial_{x_2} \varphi+\tfrac{1}{\eh}\partial_{y_2} \varphi \\ 0 \end{array}\right)\\ &  +h\eh \iota(\nabla_{x'} \zeta)+h\eh (R^t \nabla' R) \left(\begin{array}{c} \zeta \\ 0 \end{array} \right)+ \frac{h^2}{\eh} ( \int_{-1/2}^{x_3} \nabla_y \bar{g}\, , 0\,)\\ & +h^2( \int_{-1/2}^{x_3} \nabla_{x'} \bar{g}\, , 0\,)+h^2(R^t\nabla' R) \int_{-1/2}^{x_3} \bar{g};
\end{split}
\end{equation*}
the argument of the functions $\zeta,\varphi,g$  and their derivatives is $(x, x'/\eh)$.
From this we have, using
$
n\cdot\nabla' V + \mu\cdot\nabla' u = 0,
$
and the Assumption \ref{assumption:gamma}
\begin{equation}
\label{fin-10}
\begin{split}
(\nabla_h u^h)^t(\nabla_h u^h) = \id & + h\iota\left(2q_V + 2x_3\secf\right)
\\
& + h \iota\left( 2x_3 \nabla_y^2 \varphi  +  2\sym\nabla_y \zeta \right) +2h\left(\begin{array}{ccc} 0 & 0 & g_1 \\ 0 &0 & g_2 \\ g_1 & g_2 & g_3  \end{array} \right)+o(h),
\end{split}
\end{equation}
where $g=(\tfrac{1}{2}\bar{g}_1,\tfrac{1}{2} \bar{g}_2, \bar{g}_3)$ and $\tfrac{\|o(h)\|_{L^\infty}}{h} \to 0$.
Let us define
$$ E^h=\frac{\sqrt{(\nabla_h u^h)^t \nabla_h u^h}-I}{h}. $$
Using Taylor expansion
we deduce from \eqref{fin-10} that
\begin{align*}
E^h\stto E:=&
\iota\left(q_V + x_3\secf\right)+\mathcal{U}(\zeta(x',\cdot),\varphi(x',\cdot),g(x',\cdot))(x_3,y).
\end{align*}
%

Properties \eqref{ass:frame-indifference}, \eqref{ass:expansion} and
  \eqref{eq:94} yield
  \begin{equation}\label{dasistg2}
    \limsup\limits_{h\to 0}\left|\frac{1}{h^2}\mathcal
    E^{h,\eh}(u^h)-\int_{\Omega}Q(x_3,\tfrac{x'}{\eh},E^h(x))\,dx\right|=0.
  \end{equation}

  Hence, by \eqref{Qelliptisch} and by strong two scale convergence of $E^h$, we can pass to the
  limit in the second term in \eqref{dasistg2}.
\end{proof}

\begin{proof}[Proof of Theorem~\ref{T:main} (Upper bound)]
 The following is the standard diagonalization argument and is already given in \cite{Horneuvel12}.
 We may assume that $\mathcal E(u)<\infty$, so
  $u\in\WW(S,\R^3)$. Moreover, since $Q_{2}^{\text{rel}}$ is quadratic (cf. Lemma~\ref{L:Qgamma}), it suffices to prove the
  statement for a dense subset of $\WW(S,\R^3)$. Hence, by Lemma~\ref{le71}, we may assume without loss of generality that
  $u\in\mathcal A(S)$.
\\
By Lemma~\ref{L:Qgamma} there exist $B\in L^2(S,\M_{\sym}^{2})$ and
$\zeta \in L^2(S,\mathring H^1(\mathcal Y,\R^2))$, $\varphi \in L^2(S,\mathring{H}^2(\mathcal{Y}))$, $g \in L^2(S,L^2(I \times \mathcal{Y},\R^3))$ such that:
  \begin{equation}\label{eq:12}
    \mathcal E(u)=\iint_{\Omega\times Y}Q(x_3,y,\iota(x_3\secf + B)+\mathcal{U}(\zeta,\varphi,g))\,dydx.
  \end{equation}
  Since $B(x')$ depends linearly on $\secf(x')$, we know in addition that
  $B(x')=0$ for almost every $x'\in\{\,\secf=0\,\}$.

  By a density argument it suffices to show the following: There
  exists a doubly indexed sequence $u^{h,\delta}\in
  H^1(\Omega,\R^3)$ such that
  \begin{align}
    \label{eq:9}
    &\limsup\limits_{\delta\to 0}\limsup\limits_{h\to
      0}\|u^{h,\delta} - u\|_{H^1} = 0,\\
    \label{eq:11}
    &\limsup\limits_{h\to 0}\left|\frac{1}{h^2}\mathcal
      E^{h,\eh}(u^{h,\delta})-\mathcal E(u)\right|\lesssim\delta.
  \end{align}
If we establish this, then we can obtain the desired sequence by diagonalizing argument (e.~g. by appealing to
  \cite[Corollary 1.16]{Attouch-84}).

  We construct  $u^{h,\delta}$ as follows: By density, for each $\delta>0$
there exist maps $B^\delta\in C^\infty(\overline S,\R^{2\times 2}_{\sym})$ and
$\zeta^\delta \in C^\infty_c(S,\mathring{C}^\infty(\mathcal{Y},\R^2))$, $\varphi^\delta \in C_c^\infty(S,\mathring{C}^\infty(\mathcal{Y}))$, $g^\delta \in C_c^\infty(S,\mathring{C}^\infty(I \times \mathcal{Y},\R^3))$

 such that
  \begin{align}
    \label{eq:7}
    &\|B^\delta-B\|_{L^2(S)} + \|\mathcal{U}(\zeta^\delta,\varphi^\delta,g^\delta)-\mathcal{U}(\zeta,\varphi,g)\|_{L^2(\Omega\times
      Y)}\leq
    \delta^2,\\
    \label{eq:8}
    &B^\delta=0\text{ in a neighborhood of }\{\,\secf=0\,\}.
  \end{align}
\\
Since $u\in\mathcal A(S,\R^3)$ and due to \eqref{eq:8}, for each
$\delta > 0$ there exists a smooth displacement $V_{\delta}$ such that
$$
B^\delta = q_{V_{\delta}}.
$$
We apply Lemma~\ref{le73} to $u$ and $V_{\delta}$ to
obtain a sequence $u^{h,\delta}$ that converges uniformly to $u$ as $h\to 0$.
Hence \eqref{eq:9} is satisfied. Lemma~\ref{le73} also ensures that
  \begin{equation*}
    \lim\limits_{h\to 0}\frac{1}{h^2}\mathcal
    E^h(u^{h,\delta})=\iint_{\Omega\times Y}Q\left( x_3,y,\iota(x_3\secf + B^\delta)+\mathcal{U}(\zeta^\delta,\varphi^\delta,g^\delta)\right)\,dydx.
  \end{equation*}
  By continuity of the functional on the right-hand side,
  combined with \eqref{eq:7} and \eqref{eq:12}, the bound \eqref{eq:11}
  follows.
\end{proof}

\begin{remark} \label{slucaj2}
In the case $\eh^2 \sim h$ one can define the sequence, instead of the expression (\ref{eq:15}), which includes the oscillations which are of order different than $\eh$ and which possibly additionally relax the energy. This can be achieved e.g. by putting in (\ref{eq:15})  instead of $\varphi \in C_c^\infty(S,\mathring{C}^\infty(\mathcal{Y}))$ the function $\varphi_k \in C_c^\infty(S,\mathring{C}^\infty(k\mathcal{Y}))$. It can be easily seen that this is allowable recovery sequence which could have less energy than the original one. Namely, in the expression for the strain then would also appear the matrix $A^T A$, where
$$A= \left(\begin{array}{ccc} 0 & 0 &\partial_{y_1} \varphi_k \\  0 & 0 &\partial_{y_2} \varphi_k \\ -\partial_{y_1} \varphi_k & -\partial_{y_2} \varphi_k & 0\end{array} \right).$$
This nonlinear term would cause nonconvexity of the energy in $\nabla \varphi$ which has the consequence that the oscillations which are not of the order $\eh$ could also be energetically convenient.
This partially explains the lack of compactness, which is commented in Remark \ref{slucaj1}.
\end{remark}
\section{Appendix}
Let us recall some well-known properties of two scale convergence.
We
refer to \cite{Allaire-92, Visintin-06, MilTim07} for proofs
in the standard two scale setting and to \cite{Neukamm-10} for the easy
adaption to the notion of two scale convergence considered here.
\begin{lemma}\label{kompts}
  \begin{enumerate}[(i)]
  \item Any sequence that is bounded in $L^2(\tilde{\Omega})$ admits a two scale
    convergent subsequence.
  \item Let $\t f\in L^2(\tilde{\Omega}\times Y)$ and let $(f^h)_{h>0}\subset L^2(\tilde{\Omega})$ be such that $f^h\wtto \t{f}$.
   Then $f^h\wto\int_Y\t f(\cdot,y)\,dy$ weakly in $L^2(\tilde{\Omega})$.
  \item Let $f^0 \in L^2(\tilde{\Omega})$ and $(f^h)_{h>0} \subset L^2(\tilde{\Omega})$ be such that $f^h\wto
    f^0$ weakly in
    $L^2(\tilde{\Omega})$. Then (after passing to subsequences) we have $f^h\wtto f^0(x)+\t f$ for some $\t f\in L^2(\tilde{\Omega}\times
    Y)$ with $\int_Y\t f(\cdot,y)\,dy=0$ almost everywhere in $\tilde{\Omega}$. $\t f$ is uniquely characterized by the fact that $\int_{\tilde{\Omega}} f^h \psi(x,\tfrac{x}{\eh}) \ud x \to \int_{\tilde{\Omega}} \int_{\mathcal{Y}} \t f(x,y) \psi(x,y) \ud y\ud x$, for every $\psi  \in C_0^\infty
(\tilde{\Omega},\mathring{C}^\infty(\mathcal{Y}))$.
  \item Let $f^0 \in H^1(\tilde{\Omega})$ and $(f^h)_{h>0} \subset H^1(\tilde{\Omega})$ be such that $f^h\to f^0$ strongly in
    $L^2(\tilde{\Omega})$. Then $f^h\stto f^0$, where we extend $f^0$ trivially to $\tilde{\Omega}\times Y$.
  \item Let $f^0$ and $f^h\in H^1(\tilde{S})$ be such that $f^h\wto f^0$ weakly in $H^1(\tilde{S})$.
    Then (after passing to subsequences)
    \begin{equation*}
      \nabla' f^h\wtto \nabla 'f^0+\nabla_y\phi,
    \end{equation*}
    for some $\phi\in L^2(\tilde{S}, H^1(\mathcal Y))$.
  \end{enumerate}
\end{lemma}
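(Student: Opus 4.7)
The plan is to adapt the standard Nguetseng--Allaire two-scale framework (cf. \cite{Allaire-92,Nguetseng-89}) to the present anisotropic setting in which oscillations occur only in the in-plane variable $x'$. All five items are essentially folk-lore results; the only cosmetic modification with respect to the isotropic case is to apply Fubini in the out-of-plane variable $x_3$ throughout, so that the fast variable $y \in \mathcal{Y}$ effectively couples only to the $x'$-slice.

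For (i), I would define the bilinear form $T_h(\psi) := \int_{\tilde{\Omega}} f^h(x)\,\psi(x, x'/\eh)\,dx$ on $C_0^\infty(\tilde{\Omega}, C(\mathcal{Y}))$. The uniform bound $|T_h(\psi)| \le \|f^h\|_{L^2}\|\psi(\cdot, \cdot/\eh)\|_{L^2}$, together with the Riemann--Lebesgue-type identity $\|\psi(\cdot, \cdot/\eh)\|_{L^2(\tilde{\Omega})}^2 \to \|\psi\|_{L^2(\tilde{\Omega}\times Y)}^2$, allows a diagonal extraction so that $T_h$ converges on a countable dense subset, hence on all of $L^2(\tilde{\Omega}\times Y)$; Riesz representation then delivers the two-scale limit $f$. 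Statement (ii) follows by choosing $y$-independent test functions $\psi(x,y) = \varphi(x)$, and (iii) is its immediate consequence: apply (i) to obtain a two-scale limit $f$, invoke (ii) to identify its $y$-average with $f^0$, and set $\tilde f := f - f^0$. Uniqueness of $\tilde f$ among mean-zero functions follows from density of $C_0^\infty(\tilde{\Omega}, \mathring{C}^\infty(\mathcal{Y}))$ in the mean-zero subspace of $L^2(\tilde{\Omega}\times\mathcal{Y})$. For (iv), the trivial extension of $f^0$ in $y$ satisfies $\|f^0\|_{L^2(\tilde{\Omega}\times Y)} = \|f^0\|_{L^2(\tilde{\Omega})}$, so strong $L^2$-convergence of $f^h$ to $f^0$ already upgrades the weak two-scale convergence provided by (iii) to strong two-scale convergence, the oscillatory part $\tilde f$ being forced to vanish by the norm identity.

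The substantive step is (v). Via (i), I would extract a two-scale limit $\nabla' f^h \wtto g \in L^2(\tilde{S}\times Y;\R^2)$ and, by (ii), identify its $y$-average with $\nabla' f^0$. To characterize the oscillatory part $g - \nabla' f^0$, I would test against $\psi \in C_0^\infty(\tilde{S}, \mathring{C}^\infty(\mathcal{Y};\R^2))$ satisfying $\mathrm{div}_y\psi = 0$. Integration by parts yields
\[
\int_{\tilde{S}} \nabla' f^h \cdot \psi(x', \tfrac{x'}{\eh})\,dx' = -\int_{\tilde{S}} f^h\,(\mathrm{div}_{x'}\psi)(x', \tfrac{x'}{\eh})\,dx' - \tfrac{1}{\eh}\int_{\tilde{S}} f^h\,(\mathrm{div}_y\psi)(x', \tfrac{x'}{\eh})\,dx',
\]
where the singular $1/\eh$ term vanishes by the divergence-free condition. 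Passing to the limit, $g - \nabla' f^0$ is $L^2$-orthogonal to the subspace of mean-zero, $y$-solenoidal fields. By the Helmholtz decomposition on the torus $\mathcal{Y}$ (immediate via Fourier series), such a field must be a $y$-gradient, yielding $g = \nabla' f^0 + \nabla_y \phi$ for some $\phi \in L^2(\tilde{S}, H^1(\mathcal{Y}))$. The main obstacle is the rigorous justification of this last Helmholtz step; it is completely analogous to the Fourier argument used for Lemma \ref{lm:1} in the body of the paper, but it is the one place where the periodic structure of $\mathcal{Y}$ is used non-trivially, and care is required to ensure the $x'$-measurability of $\phi$ when applying the Fourier decomposition slicewise.
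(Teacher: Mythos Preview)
Your proof sketch is correct and follows the standard Nguetseng--Allaire route; there is nothing to compare it against, because the paper does not prove this lemma at all. It explicitly states that these are ``well-known properties of two scale convergence'' and refers the reader to \cite{Allaire-92, Visintin-06, MilTim07} for the classical proofs and to \cite{Neukamm-10} for the cosmetic adaptation to the anisotropic setting (oscillations only in $x'$). Your argument is precisely the one found in those references, including the Helmholtz/Fourier step for (v), so there is no discrepancy to discuss.
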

The following lemma gives us the characterization of two scale limits of scaled gradients when $h \ll \varepsilon(h)$.
For the proof of it see
\cite[Theorem~6.3.3]{Neukamm-10}.

\begin{lemma}\label{L:two scale}
  Let $u^0 \in H^1(\tilde{\Omega},\R^3)$ and $(u^h)_{h>0}\subset H^1(\tilde{\Omega},\R^3)$ be such that $u^h\wto u^0$ weakly in
  $H^1(\tilde{\Omega},\R^3)$ and $\liminf_{h\to 0}\|\nabla_h u^h\|_{L^2}<\infty$. Under the assumption $\lim_{h\to 0} \tfrac{h}{\eh}=0$
 there exist
$\phi \in L^2(\tilde{S},\mathring H^1(\mathcal Y,\R^3))$, $d \in L^2(\tilde{S},L^2(I\times \mathcal{Y},\R^3))$ such that
(after passing to subsequences)
\begin{equation}
\nabla_h u^h \wtto (\nabla'u^0\,,\,0)+(\nabla_y\phi\,,\,d).
\end{equation}
Here $u^0$ is the weak limit of $u^h$ i.e. $\int_I u^h$ in $H^1(\tilde{\Omega},\R^3)$ i.e. $H^1(\tilde{S},\R^3)$.
\end{lemma}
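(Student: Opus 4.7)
\textbf{Proof plan for Lemma~\ref{L:two scale}.}
By the $L^2$-bound on $\nabla_h u^h$ and Lemma~\ref{kompts}(i), after passing to a subsequence we may assume $\nabla_h u^h \wtto G$ for some $G \in L^2(\tilde{\Omega}\times Y,\M^3)$. Split $G=(G',G^{(3)})$, where $G'\in L^2(\tilde{\Omega}\times Y,\R^{3\times 2})$ collects the two in-plane columns and $G^{(3)}\in L^2(\tilde{\Omega}\times Y,\R^3)$ is the third column. The third column is unconstrained: boundedness of $\tfrac{1}{h}\partial_3 u^h$ gives a two-scale limit $d:=G^{(3)}\in L^2(\tilde{S},L^2(I\times\mathcal{Y},\R^3))$. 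All the work consists in showing that the in-plane part has the structure $G'=\nabla' u^0+\nabla_y\phi$ with $\phi\in L^2(\tilde S,\mathring H^1(\mathcal{Y},\R^3))$ \emph{independent of $x_3$}; the assumption $h\ll\eh$ is what enforces this $x_3$-independence and is the only non-routine point.

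The natural strategy is to pass via the planar average. Since $\|\partial_3 u^h\|_{L^2(\tilde\Omega)}\leq h\,\|\tfrac1h\partial_3u^h\|_{L^2}\leq Ch$, the limit $u^0$ is independent of $x_3$ and the Poincaré inequality on $I$ gives $\|u^h-\bar u^h\|_{L^2(\tilde\Omega)}\lesssim h$, where $\bar u^h(x'):=\int_I u^h(x',x_3)\,dx_3$. Then $\bar u^h\wto u^0$ weakly in $H^1(\tilde S,\R^3)$, and applying Lemma~\ref{kompts}(v) componentwise yields (up to a further subsequence)
\begin{equation*}
  \nabla'\bar u^h\wtto \nabla' u^0+\nabla_y\phi,\qquad \phi\in L^2(\tilde S,\mathring H^1(\mathcal Y,\R^3)),
\end{equation*}
with $\phi$ independent of $x_3$ by construction. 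It then suffices to show that $\nabla' u^h$ and $\nabla'\bar u^h$ have the same weak two-scale limit, i.e.\ that $\nabla'(u^h-\bar u^h)\wtto 0$.

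This last step is where Assumption~\ref{assumption:gamma} enters. For any $\psi\in C_0^\infty(\tilde\Omega,C^\infty(\mathcal Y,\R^{3\times 2}))$, integration by parts in the in-plane directions gives
\begin{equation*}
  \int_{\tilde\Omega}\nabla'(u^h-\bar u^h):\psi(x,\tfrac{x'}{\eh})\,dx
  =-\int_{\tilde\Omega}(u^h-\bar u^h)\cdot\operatorname{div}_{x'}\psi\,dx
  -\tfrac{1}{\eh}\int_{\tilde\Omega}(u^h-\bar u^h)\cdot(\operatorname{div}_y\psi)(x,\tfrac{x'}{\eh})\,dx.
\end{equation*}
The first term vanishes since $u^h-\bar u^h\to 0$ in $L^2$. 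The second term is bounded in modulus by $\tfrac{C}{\eh}\|u^h-\bar u^h\|_{L^2}\|\operatorname{div}_y\psi\|_\infty\lesssim \tfrac{h}{\eh}\to 0$ by Assumption~\ref{assumption:gamma}. Hence $\nabla'u^h$ and $\nabla'\bar u^h$ share the two-scale limit, giving $G'=\nabla' u^0+\nabla_y\phi$ as required.

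\textbf{Main obstacle.} The delicate point is precisely the bound $\tfrac{h}{\eh}\|u^h-\bar u^h\|_{L^2}/h=O(\tfrac{h}{\eh})\to 0$ in Step~3: this is what rules out oscillatory corrections to $u^h$ at scale $\eh$ that couple to the thin-film variable $x_3$. In the opposite regime $\eh\lesssim h$, the same computation would fail and one must allow $\phi=\phi(x,y)$ with genuine $x_3$-dependence, together with an additional coupling in the normal component—this is the standard discrepancy between the regimes $h\ll\eh$, $h\sim\eh$ and $h\gg\eh$ alluded to in the introduction.
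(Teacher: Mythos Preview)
Your argument is correct. The paper itself does not prove this lemma but simply refers to \cite[Theorem~6.3.3]{Neukamm-10}, so there is no ``paper's own proof'' to compare against; you have supplied a clean self-contained argument that would serve well in its place.

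A couple of minor remarks. First, in the integration-by-parts step you should note explicitly that $\nabla'(u^h-\bar u^h)$ is bounded in $L^2(\tilde\Omega)$ (it is, being the difference of two bounded sequences), so that testing against the dense class $C_0^\infty(\tilde\Omega,C^\infty(\mathcal Y))$ suffices to identify the two-scale limit as zero. Second, you invoke ``Assumption~\ref{assumption:gamma}'' for the bound $h/\eh\to 0$, but the lemma as stated assumes only this single condition and not the two-sided scaling $\eh^2\ll h\ll\eh$; it is cleaner to cite the hypothesis of the lemma directly. Neither point affects the validity of the proof.
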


The following two lemmas are proved in \cite{Velcic-12}. The first one tells us that two scale limit, like weak limit, behaves nicely when we multiply it with a bounded sequence which converges in measure. The second one tells us that if we have a sequence of vector fields that is closer to a sequence of gradients than the order of oscillations, then the form of the oscillatory part of its two scale limit is the second gradient, like the two scale limit of gradients. This is also valid under subtle assumption that the sequence of gradients itself converges only strongly in $L^2$ (but not necessarily weakly in $H^1$).
\begin{lemma} \label{mnozenjets}
Let $(u^h)_{h>0}$ be a bounded sequence in $L^2(\tilde{\Omega})$ which
two scale converges to $u_0(x,y) \in L^2(\tilde{\Omega} \times \mathcal{Y})$. Let
$(v^h)_{h>0}$ be a sequence bounded in $L^\infty(\tilde{\Omega})$ which
converges in measure to $v_0 \in L^{\infty}(\tilde{\Omega})$. Then $v^h
u^h \wtto v_0(x) u_0(x,y)$.
\end{lemma}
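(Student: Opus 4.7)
The plan is to test against an arbitrary admissible test function $\psi \in C^\infty_0(\tilde{\Omega}, C(\mathcal{Y}))$ and show
\[
\int_{\tilde{\Omega}} v^h(x)\, u^h(x)\, \psi(x,\tfrac{x'}{\eh})\,dx \longrightarrow \iint_{\tilde{\Omega}\times Y} v_0(x)\, u_0(x,y)\, \psi(x,y)\,dy\,dx,
\]
by splitting $v^h u^h = v_0 u^h + (v^h - v_0) u^h$ and handling each piece. Note first that $\|v^h u^h\|_{L^2} \leq \|v^h\|_{L^\infty}\|u^h\|_{L^2}$ is uniformly bounded, so this identifies the unique two scale limit along every subsequence.

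\textbf{Vanishing piece.} The function $(v^h - v_0)\psi(\cdot,\tfrac{\cdot}{\eh})$ is uniformly bounded in $L^\infty(\tilde{\Omega})$ by $2\sup_h\|v^h\|_{L^\infty}\cdot\|\psi\|_\infty$ and converges to $0$ in measure, since $v^h\to v_0$ in measure and $\psi$ is bounded. By the dominated convergence theorem it converges to $0$ strongly in $L^2(\tilde{\Omega})$. Combined with the $L^2$ boundedness of $u^h$, Cauchy--Schwarz gives
\[
\int_{\tilde{\Omega}} (v^h-v_0)\, u^h\, \psi(x,\tfrac{x'}{\eh})\,dx \longrightarrow 0.
\]

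\textbf{Main piece.} Since $v_0\in L^\infty\subset L^2(\tilde{\Omega})$, choose a sequence $(v_0^k)\subset C^\infty_0(\tilde{\Omega})$ (uniformly bounded in $L^\infty$ by truncation) with $v_0^k\to v_0$ in $L^2(\tilde{\Omega})$. For each fixed $k$, the product $v_0^k(x)\psi(x,y)$ lies in $C^\infty_0(\tilde{\Omega},C(\mathcal{Y}))$, so by the two scale convergence $u^h\wtto u_0$,
\[
\int_{\tilde{\Omega}} v_0^k(x)\, u^h(x)\, \psi(x,\tfrac{x'}{\eh})\,dx \longrightarrow \iint_{\tilde{\Omega}\times Y} v_0^k(x)\, u_0(x,y)\, \psi(x,y)\,dy\,dx.
\]
The error from replacing $v_0^k$ by $v_0$ is bounded, uniformly in $h$, by $\|v_0-v_0^k\|_{L^2}\cdot\|u^h\|_{L^2}\cdot\|\psi\|_\infty$, and similarly at the limit. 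Sending first $h\to 0$ and then $k\to\infty$ yields the convergence of the main piece to $\iint v_0 u_0 \psi\,dy\,dx$.

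There is no real obstacle here; the only subtlety is that $v_0$ is merely $L^\infty$, not continuous, so one cannot use it directly as a test function slot. The density of $C^\infty_0$ in $L^2$, together with the uniform $L^2$ bound on $u^h\psi(\cdot,\cdot/\eh)$, neutralizes this in the standard way. Combining the two pieces proves $v^h u^h \wtto v_0 u_0$.
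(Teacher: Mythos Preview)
Your proof is correct. The paper itself does not give a proof of this lemma; it simply states it in the Appendix and refers to \cite{Velcic-12} for the argument, so there is nothing in the present paper to compare against. Your direct verification --- splitting $v^h u^h = v_0 u^h + (v^h-v_0)u^h$, using that boundedness in $L^\infty$ together with convergence in measure on a set of finite measure yields strong $L^2$ convergence for the second piece, and approximating $v_0$ by smooth compactly supported functions to treat the first piece --- is the standard and self-contained way to prove this fact. One small wording remark: invoking ``the dominated convergence theorem'' for the $L^2$ convergence of $(v^h-v_0)\psi(\cdot,\cdot/\eh)$ is slightly informal, since you only have convergence in measure rather than almost everywhere; but the version of dominated convergence for sequences converging in measure (or the direct tail estimate $\int|f_n|^2 \leq M^2|\{|f_n|>\epsilon\}| + \epsilon^2|\tilde{\Omega}|$) gives exactly what you need.
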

\begin{lemma} \label{pomoooc}
Let $(u^h)_{h>0}$ be a sequence which converges strongly
to $u$ in $H^1(\tilde{\Omega})$ and $(v^h)_{h>0}$ be a sequence which
is bounded in $H^1(\tilde{\Omega},\R^n)$ such that for each $h>0$
\begin{equation} \label{oggrada}
\| \nabla u^h-v^h\|_{L^2} \leq C\eta(h),
\end{equation}
for some $C>0$ and $\eta(h)$ which satisfies $\lim_{h \to 0} \tfrac{\eta(h)}{\eh}=0$. Then for any subsequence of $(\nabla v^h)_{h>0}$ which converges two scale there exists a unique $v \in
L^2(\Omega,\mathring{H}^2(\mathcal Y))$ such that $\nabla v^h
\wtto \nabla^2 u(x)+\nabla^2_y
v(x,y)$.
\end{lemma}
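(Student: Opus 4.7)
The plan is to identify the two scale limit of $\nabla v^h$ in three stages. First, observe that the hypotheses $u^h \to u$ in $H^1(\tilde\Omega)$ together with $\|v^h-\nabla u^h\|_{L^2} \le C\eta(h) \to 0$ imply $v^h \to \nabla u$ strongly in $L^2$, and, combined with the $H^1$-boundedness of $v^h$, we may extract a subsequence with $v^h \rightharpoonup \nabla u$ weakly in $H^1(\tilde\Omega)$; in particular $u \in H^2(\tilde\Omega)$. Applying Lemma \ref{kompts}(v) componentwise to $v^h$ then yields (after a further extraction)
\begin{equation*}
\nabla v^h \wtto \nabla(\nabla u)(x) + \nabla_y \phi(x,y),
\end{equation*}
for some $\phi \in L^2(\tilde\Omega,\mathring H^1(\mathcal Y,\R^n))$ that is a priori only a vector field, not a gradient in $y$.

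Second, I would show that $\phi$ is $y$-curl-free, i.e.\ $\partial_{y_\alpha}\phi_\beta = \partial_{y_\beta}\phi_\alpha$ for all $\alpha,\beta$. Setting $w^h := v^h-\nabla u^h$ one has $\|w^h\|_{L^2} \le C\eta(h)$, and since $\nabla u^h$ is curl-free in the distributional sense,
\begin{equation*}
\partial_\beta v^h_\alpha - \partial_\alpha v^h_\beta = \partial_\beta w^h_\alpha - \partial_\alpha w^h_\beta.
\end{equation*}
Testing against $\psi(x)\chi(x'/\varepsilon(h))$ with $\psi\in C_c^\infty(\tilde\Omega)$ and $\chi\in C^\infty(\mathcal Y)$, and integrating by parts to move derivatives off $w^h$, yields a sum of two kinds of terms: one of size $O(\|w^h\|_{L^2}\|\partial\psi\|_\infty) = O(\eta(h))$, and one of size $O(\|w^h\|_{L^2}\|\partial_y\chi\|_\infty/\varepsilon(h)) = O(\eta(h)/\varepsilon(h))$. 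Both vanish by the gap assumption $\eta(h)/\varepsilon(h)\to 0$. Passing to the two scale limit in the left-hand side then gives $\iint_{\tilde\Omega\times Y}(\partial_{y_\beta}\phi_\alpha-\partial_{y_\alpha}\phi_\beta)\psi\chi\,dy\,dx=0$ for arbitrary such test functions, hence $\phi$ is $y$-curl-free.

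Finally, a standard Fourier series argument on the torus $\mathcal Y$ shows that any mean-zero curl-free field in $\mathring H^1(\mathcal Y,\R^n)$ is of the form $\nabla_y v$ for a unique $v\in\mathring H^2(\mathcal Y)$ (applied componentwise in any index that does not correspond to a $y$-direction). Pointwise in $x$ this gives $\phi(x,\cdot)=\nabla_y v(x,\cdot)$ with $v\in L^2(\tilde\Omega,\mathring H^2(\mathcal Y))$, which is exactly the asserted structure; uniqueness of $v$ follows from the normalization $\int_Y v(x,\cdot)\,dy=0$.

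The main obstacle is the middle step: one must absorb the $1/\varepsilon(h)$ blow-up produced by differentiating the oscillatory test function, and it is precisely here that the hypothesis $\eta(h)\ll\varepsilon(h)$ is used in an essential way. Without this gap (e.g.\ if only $\eta(h)\lesssim\varepsilon(h)$), one would pick up a nonzero contribution in the limit, the curl-free conclusion could fail, and the oscillatory part of the two scale limit would no longer reduce to a second gradient in $y$.
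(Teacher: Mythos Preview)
The paper does not prove this lemma; it is quoted in the Appendix with the remark ``The following two lemmas are proved in \cite{Velcic-12}.'' Your argument is correct and is the natural route: first identify the weak $H^1$ limit of $v^h$ as $\nabla u$ and apply Lemma~\ref{kompts}(v) to obtain $\nabla v^h\wtto\nabla^2 u+\nabla_y\phi$; then exploit that $\partial_\beta v^h_\alpha-\partial_\alpha v^h_\beta=\partial_\beta w^h_\alpha-\partial_\alpha w^h_\beta$ with $\|w^h\|_{L^2}\le C\eta(h)$, and absorb the $1/\eh$ from the oscillatory test function via the gap $\eta(h)/\eh\to 0$ to force $\phi$ to be $y$-curl-free; finally lift $\phi$ to $\nabla_y v$ on the torus. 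This is exactly the mechanism one expects, and your identification of the role of the scale separation $\eta(h)\ll\eh$ is on point. One minor remark: in the paper's setting the fast variable $y$ is two-dimensional ($Y=[0,1)^2$), so the statement is really meant for $\tilde\Omega\subset\R^2$ and $n=2$ (cf.\ its use in Lemma~\ref{glavnapomoc}); your parenthetical about ``any index that does not correspond to a $y$-direction'' is therefore not needed here, though it does no harm.
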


{\bf Acknowledgements.} The work on this paper was mainly done while the author was affiliated with Peter Hornung's group in Max Planck Institute for Mathematics in the Sciences in Leipzig, Germany. The author was supported by Deutsche Forschungsgemeinschaft grant no.
HO-4697/1-1.

\bibliographystyle{plain}

\end{document}